\newcommand{\ZZ}{\mathbb{Z}}
\newcommand{\A}{\mathbb{A}}
\newcommand{\X}{\mathrm{X}}
\newcommand{\E}{\mathcal{E}}
\newcommand{\B}{\mathbf{B}}
\newcommand{\T}{\mathcal{T}}
\newcommand{\R}{\mathcal{R}}
\def\alp{{\alpha}}
\def\bet{{\beta}}
\def\gam{{\gamma}}
\def\sig{{\sigma}}
\def\vphi{{\varphi}}
\def\Gam{{\Gamma}}
\def\Lam{{\Lambda}}
\def\vphi{{\varphi}}
\def\Pro{\mathrm{Pro}}
\def\Set{\mathrm{Set}}
\newtheorem{thm}{Theorem}[section]
\newtheorem{lem}[thm]{Lemma}
\newtheorem{prop}[thm]{Proposition}
\theoremstyle{definition}
\newtheorem{define}[thm]{Definition}
\theoremstyle{remark}
\newtheorem{rem}[thm]{Remark}
\newtheorem{example}{Example}
\DeclareMathOperator{\Gal}{Gal}
\DeclareMathOperator{\Spec}{Spec}
\DeclareMathOperator{\precolim}{colim}
\def\colim{\mathop{\precolim}}
\DeclareMathOperator{\spec}{Spec}
\DeclareMathOperator{\Out}{Out}
\DeclareMathOperator{\Ob}{Ob}
\DeclareMathOperator{\fin}{fin}
\DeclareMathOperator{\Iso}{Iso}
\DeclareMathOperator{\df}{def}
\DeclareMathOperator{\Id}{Id}
\DeclareMathOperator{\FinAb}{FinAb}
\DeclareMathOperator{\FinSet}{FinSet}
\DeclareMathOperator{\Sec}{Sec}
\DeclareMathOperator{\Sch}{Sch}
\def\Aut{\textrm{Aut\,}}
\def\lrar{\longrightarrow}
\def\x{\stackrel}
\def \ovl{\overline}
\def \what{\widehat}
\def \wtl{\widetilde}
\title { The Section Conjecture for Graphs and Conical Curves }
\author{ Yonatan Harpaz }
\begin{document}
\maketitle

\begin{abstract}
In this paper we formulate and prove a combinatorial version of the section conjecture for finite groups acting on finite graphs. We apply this result to the study of rational points and show that finite descent is the only obstruction to the Hasse principle for mildly singular curves whose components are all geometrically rational.
\end{abstract}

\tableofcontents

\section{ Introduction }

The connection between rational points and sections of the Grothendieck exact sequence has an analogue for topological spaces carrying an action of a group. The corresponding section conjecture is false in general (as for general varieties), but it does hold for some special classes of spaces and groups. In this paper we will show that a certain variant of this section conjecture is true when the underlying space is a graph and the acting group is finite.

This result can be applied back to the study of rational points by considering algebraic varieties whose geometry is controlled by a graph. In particular, we will consider the following class of (mildly singular) curves:
\begin{define}
Let $K$ be a field with algebraic closure $\ovl{K}$. We will say that a projective reduced curve $C/K$ is \textbf{conical} if each irreducible component of $\ovl{C} = C \otimes_K \ovl{K}$ is $\ovl{K}$-rational (but not necessarily smooth). We will say that a projective reduced curve is \textbf{transverse} if each singular point is a transverse intersection of components, i.e. if for each singular point $s \in \ovl{C}\left(\ovl{K}\right)$ the branches of $\ovl{C}$ through $s$ intersect like $n$ coordinate axes at $0 \in \A^n$.
\end{define}

Conical curves were studies by A. Skorobogatov and the author in~\cite{sh}. In that paper it was shown that conical curves can posses surprising properties no smooth curve can ever have: they can contain infinitely many adelic points while having only finitely many rational points or none at all, and yet have a trivial Brauer group. In particular, the Brauer-Manin obstruction is \textbf{not} the only obstruction to the Hasse principle for such curves. This was used in~\cite{sh} in order to construct \textbf{smooth projective surfaces} with no rational points and infinite \'etale-Brauer sets. The surfaces in question were fibred into curves with some of the fibers being conical curves.

As the Brauer-Manin obstruction is not sufficient for conical curves, one is led to ask whether the \textbf{\'etale-Brauer} obstruction is sufficient. In other words, if a conical curve has a non-empty \'etale-Brauer set, must it have also a rational point? In this paper we will apply the section conjecture for graphs in order to prove the following result, implying in particular a positive answer to this question (at least in the transverse case):

\begin{thm}
\textbf{Finite descent} is the only obstruction to the Hasse principle for transverse conical curves over number fields.
\end{thm}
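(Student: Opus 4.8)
The plan is to reduce the arithmetic statement to the paper's combinatorial section conjecture for graphs by passing to the dual graph of the curve. Fix a transverse conical curve $C/K$ over a number field and suppose the finite-descent set $C(\A_K)^{\text{f-cov}}$ --- the adelic points surviving every obstruction attached to a finite \'etale covering of $C$ --- is non-empty; we must produce a point of $C(K)$, and we may assume $C$ geometrically connected. First I would attach to $\ovl C$ its incidence graph $\Gamma$: one vertex for each irreducible component of $\ovl C$, one vertex for each singular point at which at least three branches meet, and edges recording incidences of branches, so that an ordinary double point of two components is a single edge, a self-node is a loop, and a point where $n\geq 3$ branches meet is a star on $n$ edges. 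Transversality is precisely the hypothesis that makes the \'etale-local structure of $\ovl C$ at each singular point contractible, so that $\ovl C$ is ``homotopy equivalent'' to $\Gamma$; since the normalizations of the components are simply connected rational curves, this produces a canonical $\Gal(\ovl K/K)$-equivariant isomorphism $\pi_1^{\et}(\ovl C)\cong\widehat{\pi_1(\Gamma)}$, through which the Galois action factors through the finite quotient $G=\Gal(L/K)$, where $L$ is the smallest extension of $K$ over which all components and all singular points are defined.

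The second step converts the hypothesis into a section. By the standard dictionary between the finite-descent obstruction and sections of the fundamental exact sequence (Stoll; Harari--Stix), non-emptiness of $C(\A_K)^{\text{f-cov}}$ yields a section $s$ of the fundamental exact sequence $1\to\pi_1^{\et}(\ovl C)\to\pi_1^{\et}(C)\to\Gal(\ovl K/K)\to 1$ which is \emph{everywhere locally geometric}: for each place $v$ the restriction $s|_{\Gal(\ovl{K_v}/K_v)}$ is induced by a point of $C(K_v)$. Using the identification $\pi_1^{\et}(\ovl C)\cong\widehat{\pi_1(\Gamma)}$, the fact that the action factors through $G$, and the triviality of the centre of a free profinite group (which rigidifies the relevant extension), one descends $s$ to a section of the combinatorial fundamental exact sequence $1\to\pi_1(\Gamma)\to\pi_1(\Gamma)\rtimes G\to G\to 1$ of the $G$-graph $\Gamma$. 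Now I would apply the combinatorial section conjecture: it assigns to this section a $G$-fixed ``combinatorial point'' $P$ of $\Gamma$ --- a fixed vertex or the midpoint of a $G$-stable edge whose endpoints are interchanged --- and, by functoriality of the assignment under the base changes $K\hookrightarrow K_v$, the localized sections are assigned the same $P$, so the local $K_v$-points that realize them all lie over $P$.

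It remains to read a rational point off $P$. Suppose first that $P$ is a singular-point vertex, or the midpoint of an edge joining two component-vertices, or the midpoint of a loop --- these exhaust the possible $G$-fixed midpoints, since the endpoints of an edge joining a component-vertex to a singular-point vertex are of different types and cannot be interchanged. In each of these cases $P$ corresponds to a singular point of $C$ defined over $K$ (in the midpoint cases, a node whose two branches are permuted by $G$), which is then an element of $C(K)$, and we are done. Otherwise $P$ is a component-vertex, i.e.\ a Galois-stable geometric component $Y\subseteq\ovl C$; it descends to a curve $C_Y/K$ whose normalization $\mathcal C_Y$ is a smooth geometrically connected genus-zero curve over $K$, that is, a conic. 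By the previous step, for every place $v$ there is a point of $C(K_v)$ lying over the vertex $Y$; such a point is a smooth point of $C$ on $Y$, hence a smooth point of $C_Y$, hence lifts to $\mathcal C_Y(K_v)$. Thus $\mathcal C_Y$ has points everywhere locally, so $\mathcal C_Y(K)\neq\varnothing$ by the Hasse--Minkowski theorem, and the image of any such point under the normalization morphism $\mathcal C_Y\to C$ is a $K$-rational point of $C$.

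Constructing the incidence graph and computing $\pi_1^{\et}(\ovl C)$ Galois-equivariantly are routine consequences of transversality, and the combinatorial section conjecture is the paper's separate main result; the weight of the argument falls on the middle step. There one must make precise the passage ``finite-descent obstruction $\leadsto$ everywhere-locally-geometric section of the combinatorial exact sequence'': this requires extending the Stoll--Harari--Stix machinery, normally stated for smooth geometrically connected curves, to the present singular setting --- where the freeness of $\pi_1^{\et}(\ovl C)$ ought to help rather than hinder --- and keeping track of the local behaviour of the section carefully enough that the concluding Hasse--Minkowski step has the input it needs. I expect this bookkeeping, rather than any isolated difficulty, to be the principal obstacle.
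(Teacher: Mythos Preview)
Your overall architecture matches the paper's: encode $\ovl C$ by its incidence graph, identify $\pi_1^{\et}(\ovl C)$ Galois-equivariantly with the profinite fundamental group of the graph, convert the finite-descent hypothesis into a locally realizable section via Harari--Stix, descend that section to the finite quotient $G=\Gal(L/K)$, invoke the combinatorial section conjecture, and close with Hasse--Minkowski on the resulting conic. The paper's graph differs from yours in that it keeps \emph{every} singular point as a vertex, so it is bipartite and $G$ can never swap the endpoints of an edge; this makes your midpoint case-analysis unnecessary and lets the section conjecture be applied exactly as stated. A barycentric subdivision of your graph recovers theirs, so this difference is cosmetic.

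The genuine gap is the descent step. You assert that the identification $\pi_1^{\et}(\ovl C)\cong\widehat{\pi_1(\Gamma)}$, the factoring of the action through $G$, and the triviality of the centre of a free profinite group together let you descend the section $s$ from $\Gamma_K$ to $G$. Centrelessness does buy you that the $\Gamma_K$-extension is the pullback of the $G$-extension, so sections of the former are the same as \emph{lifts} $\psi:\Gamma_K\to\widehat{\pi_1(\Gamma)}\rtimes G$; but it gives no reason for such a lift to kill $\Gamma_L=\ker(\Gamma_K\twoheadrightarrow G)$. Indeed $\psi|_{\Gamma_L}$ is a continuous homomorphism $\Gamma_L\to\widehat{\pi_1(\Gamma)}$, and there are many nonzero such maps. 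The paper supplies the missing idea: since $s$ is locally realizable, $\psi$ vanishes on every local subgroup $\Gamma_\nu\cap\Gamma_L$ (this vanishing is conjugation-invariant), and then \emph{Chebotarev's density theorem} forces $\psi|_{\Gamma_L}=0$ because those local subgroups topologically generate $\Gamma_L$. This is not bookkeeping but the essential arithmetic input that collapses the problem to a finite group, without which the combinatorial section conjecture---stated only for finite $G$---cannot be applied.
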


\begin{rem}
Since the obstruction of finite descent is weaker than the \'etale-Brauer obstruction this implies that the \'etale-Brauer obstruction is sufficient as well.
\end{rem}

\begin{rem}
Let $C$ be a curve. Then there exists a universal transverse curve $C'$ admitting a map $\rho:C' \lrar C$ (see \cite[p. 247]{blr}). The curve $C'$ can be obtained from the normalization $\wtl{C}$ of $C$ by gluing points which have the same image in $C$. Note that the map $\rho$ induce an isomorphism
$$ \rho_*:C'(L) \lrar C(K) $$
for every field $L$ containing $K$. In particular, when dealing with Diophantine questions such as the Hasse principle it is quite reasonable to restrict attention to the transverse case.
\end{rem}

This paper is organized as follows. In \S~\ref{s:grothendieck} we recall a general setting for the Grothendieck exact sequence and describe some relevant examples. In \S~\ref{s:pro-trees} we recall the notion of \textbf{pro-finite trees} as studied in~\cite{zm} and use it in order to prove an appropriate section conjecture for finite groups acting on finite graphs. In \S~\ref{s:inc-graphs} we explain how to model a conical curve $C$ by its \textbf{incidence graph} $X(C)$ and use it in order to relate the Grothendieck exact sequences of $C$ of $X(C)$. We will then use these results in order to prove the main theorem as stated above.

\section{ Grothendieck's short exact sequence }\label{s:grothendieck}
In this section we will recall Grothendieck's short exact sequence in a somewhat abstract setting. We do not claim any originality for the content of this section (most of it is essentially due to Grothendieck). See~\cite{d} \S 10 for a thorough presentation of the subject.

Let $\Pi$ be a small groupoid and $\Gam$ a group acting on $\Pi$. The Grothendieck construction associates with the pair $\Pi,\Gam$ a new groupoid $Q(\Pi,\Gam)$ as follows. The objects of $Q(\Pi,\Gam)$ are the objects of $\Pi$. Now for every two objects $x,y \in \Pi$ the morphisms $Q(\Pi,\Gam)(x,y)$ are pairs $(\sig,\alp)$ where $\sig \in \Gam$ and $\alp: \sig(x) \lrar y$ is a morphism in $\Pi$. Composition is given by
$$ (\sig,\alp) \circ (\tau,\bet) = (\sig\tau, \alp \circ \sig(\bet)) .$$
It is easy to verify that $Q(\Pi,\Gam)$ is indeed a groupoid. 

Let $\B\Gam$ denote the groupoid with one object $*$ such that $\B\Gam(*,*) = \Gam$. One then has a natural functor
$$ Q(\Pi,\Gam) \lrar \B\Gam $$
which sends all objects to $*$ and the morphism $(\sig,\alp)$ to the morphism $\sig$. The groupoid-theoretic fiber of this map can then be identified with $\Pi$.

\begin{rem}
This construction is a particular case of a more general Grothendieck construction which applies to the situation of a functor from a groupoid $\B$ to the category of small groupoids. The particular case above is obtained by taking $\B = \B\Gam$ in which case a functor from $\B$ to groupoids is just a groupoid with a $\Gam$-action.
\end{rem}

\begin{rem}
In homotopy theory the groupoid $Q(\Pi,\Gam)$ is also known as the \textbf{homotopy quotient} of $\Pi$ under the action of $\Gam$. This is why we denote it here with the letter $Q$, although this notation is not standard.
\end{rem}

Now assume that $\Pi$ is connected (i.e. every two objects are isomorphic) and let $x \in \Pi$ be an object. Let us denote by
$$ Q(\Pi,\Gam,x) \x{\df}{=} Q(\Pi,\Gam)(x,x) $$
the set of morphisms from $x$ to $x$ in $Q(\Pi,\Gam)$. We have a surjective homomorphism of groups
$$ Q(\Pi,\Gam,x) \lrar \B\Gam(x,x) = \Gam $$
whose kernel can be identified with $\Pi(x,x)$. In particular, we obtain a short exact sequence of the form
$$ 1 \lrar \Pi(x,x) \lrar Q(\Pi,\Gam,x) \lrar \Gam \lrar 1 $$
which we shall call the \textbf{Grothendieck exact sequence} associated with $(\Pi,\Gam)$.

Now suppose that $y \in \Pi$ is an object which is fixed by $\Gam$. One can then construct a section
$$ \xymatrix{
Q(\Pi,\Gam,x) \ar[r] & \Gam \ar@/_1pc/[l] \\
}$$
as follows. Choose an isomorphism $\vphi: x \lrar y$. Then for each $\sig \in \Gam$ we have an isomorphism
$$ \sig(\vphi): \sig(x) \lrar \sig(y)  = y .$$
Hence we can construct a section $s_\vphi: \Gam \lrar Q(\Pi,\Gam,x)$ by setting
$$ s_\vphi(\sig) = (\sig,\vphi^{-1} \circ \sig(\vphi)) .$$
It is not hard to verify that $s_\vphi$ is indeed a homomorphism of groups. Note that if we would have chosen a different isomorphism $\vphi': x \lrar y$ then we would get
$$ s_{\vphi'} = \psi^{-1} s_{\vphi} \psi $$
where
$$ \psi = \vphi^{-1} \circ \vphi' \in \Pi(x,x) \subseteq Q(\Pi,\Gam,x) .$$
Two sections which differ by a conjugation with an element of $\Pi(x,x)$ are called \textbf{conjugated}. We hence obtain a natural map from the set of $\Gam$-fixed objects of $\Pi$ to the set of \textbf{conjugacy classes of sections} of the form

$$ \xymatrix{
1 \ar[r] & \Pi(x,x) \ar[r] & Q\left(\Pi,\Gam,x\right) \ar[r] & \Gam \ar@/_1pc/[l] \ar[r] & 1 \\
}.$$

We will denote by $\Sec(\Pi,G)$ the set of conjugacy classes of sections as above and by
$$ h: \Pi^G \lrar \Sec(\Pi,G) $$
the map associating with each fixed object its corresponding conjugacy class of maps.

\begin{rem}\label{r:invariant-iso}
Two fixed objects $y,y'$ which are isomorphic via a \textbf{$\Gam$-invariant} isomorphism will give rise to the same conjugacy class. Hence the map $h$ always factors through a map
$$ \pi_0\left(\Pi^G\right) \lrar \Sec(\Pi,G) $$
where $\pi_0$ of a groupoid denotes the corresponding set of isomorphism types.
\end{rem}

\begin{example}\label{e:fundamental-groupoid}
Let $X$ be a topological space. The \textbf{fundamental groupoid} $\Pi_1(X)$ is the groupoid whose objects are the points of $X$ and the morphisms from $x$ to $y$ are the homotopy classes of continuous paths from $x$ to $y$. 

If $G$ is a group acting on $X$ then $\Pi_1(X)$ naturally inherits a $G$-action as well. The Grothendieck short exact sequence in this case is the sequence
$$ 1 \lrar \Pi_1(X,x) \lrar Q(\Pi_1(X),G,x) \lrar G \lrar 1 $$
where $\Pi_1(X,x)$ can be identified with the fundamental group of $X$ based at $x$. The section map then descends to a map
$$ \pi_0\left(X^G\right) \lrar \Sec(\Pi_1(X),G) .$$
In general this map is far from being an isomorphism. For example consider the group $\ZZ/2$ acting on $S_2$ via the antipodal involution. Then we have $\left(S^2\right)^{\ZZ/2} = \emptyset$ while $\Sec(\Pi_1(S_2),\ZZ/2) = \{*\}$ because $\Pi_1(S_2,x) = 1$ for every $x \in S^2$.
\end{example}

In order to describe the algebro-geometric analogue of Example~\ref{e:fundamental-groupoid} in the language of Grothendieck's construction it is useful to replace the notion of a groupoid by a \textbf{pro-finite} analogue:

\begin{define}
A \textbf{pro-finite groupoid} $\Pi$ is a groupoid enriched in pro-finite sets. More explicitly, we have:
\begin{enumerate}
\item
A collection of objects $\Ob(\Pi)$.
\item
For each $X,Y \in \Ob(\Pi)$ a \textbf{pro-finite set} of morphisms
$$ \Pi(X,Y) .$$
\item
For each three objects $X,Y,Z \in \Ob(\Pi)$ a \textbf{continuous} associative composition rule
$$ \Pi(X,Y) \times \Pi(Y,Z) \lrar \Pi(X,Z) .$$
\item
For each object $X \in \Ob(\Pi)$ a designated identity morphisms $\Id_X \in \Pi(X,X)$ which is neutral with respect to composition.
\item
For each morphism $f: X \lrar Y$ an inverse $f^{-1}: Y \lrar X$ (this is the \textbf{groupoid} part of the definition).
\end{enumerate}

\end{define}

\begin{rem}
The category of pro-finite sets can be naturally identified with the category of totally disconnected compact Hausdorf spaces. The reader is welcome to choose his or hers preferred point of view for this notion.
\end{rem}

Now let $\Pi$ be a pro-finite groupoid and let $\Gam$ be a pro-finite group acting on $\Pi$ (via continuous functors). The Grothendieck construction described above generalizes naturally to the pro-finite setting. In this case $Q(\Pi,\Gam)$ inherits a natural structure of a pro-finite groupoid and the Grothendieck short exact sequence
$$ 1 \lrar \Pi(x,x) \lrar Q(\Pi,\Gam,x) \lrar \Gam \lrar 1 $$
becomes a short exact sequence of pro-finite groups.

\begin{example}
Let $X$ be a variety over an \textbf{algebraically closed} field $k$. Let $\E(X)$ be the category whose objects are connected finite \'etale coverings $f:Y \lrar X$ and whose morphisms are maps over $X$. For each point $x \in X\left(k\right)$ we have an associated \textbf{fiber functor}
$$ F_x: \E(X) \lrar \Set $$
which sends $(f:Y \mapsto X) \in \E(X)$ to the fiber $f^{-1}\left(x\right) \in \Set$. Now given two points $x,y \in X\left(k\right)$ we can consider the set
$$ \Iso(F_x,F_y) $$
of \textbf{natural equivalences} from $F_x$ to $F_y$. More explicitly, each $\alp \in \Iso(F_x,F_y)$ consists of a compatible family of isomorphisms
$$ \alp_f: f^{-1}(x) \lrar f^{-1}(y) $$
indexed by the objects $f: Y \lrar X$ of $\E(X)$. The set $\Iso(F_x,F_y)$ carries a natural topology in which the neighborhood basis of an element $\alp \in \Iso(F_x,F_y)$ consists of the sets
$$ W_{\alp,f} = \left\{\bet \in \Iso(F_x,F_y) | \bet_f = \alp_f\right\} $$
for each $f: Y \lrar X$ in $\E(X)$. This topology makes $\Iso(F_x,F_y)$ into a totally disconnected compact Hausdorf space, or in other words, a pro-finite set. Since natural equivalences can be composed and inverted we get a pro-finite groupoid $\Pi_1\left(X\right)$ whose objects are
$$ \Ob\left(\Pi_1\left(X\right)\right) = X\left(k\right) $$
and whose morphisms are
$$ \Pi_1\left(X\right)(x,y) \x{\df}{=} \Iso(F_x,F_y) .$$
This pro-finite groupoid is called the \textbf{\'etale fundamental groupoid} of $X$. Given a point $x \in X\left(k\right)$ one denotes
$$ \Pi_1\left(X,x\right) \x{\df}{=} \Pi_1(X)(x,x) .$$
This pro-finite group is called the \textbf{\'etale fundamental group} of $X$ based at $x$.
\end{example}

If $X$ is a variety over a general field $K$ and $\ovl{X} = X \times_K \ovl{K}$ is the base change to the algebraic closure then $\Pi_1\left(\ovl{X}\right)$ inherits a natural action of $\Gam_K = \Gal\left(\ovl{K}/K\right)$ by (continuous) functors. Taking the associated Grothendieck exact sequence we obtain a short exact sequence of pro-finite groups
$$ 1 \lrar \Pi_1\left(\ovl{X},x\right) \lrar Q\left(\Pi_1\left(\ovl{X}\right),\Gam_K,x\right) \lrar \Gam_K \lrar 1 .$$
Note that any $K$-rational point of $X$ determines a $\Gam_K$-invariant object of $\Pi_1\left(\ovl{X}\right)$ and hence a conjugacy class of sections
$$ \xymatrix{
Q\left(\Pi_1\left(\ovl{X}\right),\Gam_K,x\right) \ar[r] & \Gam_K \ar@/_1pc/[l] \\
}.$$

\begin{example}\label{e:spaces}
Let $X$ be a topological space and $G$ a finite group acting on $X$. Imitating the algebraic construction of the fundamental groupoid one can consider the category $\E(X)$ of finite covering maps $Y \lrar X$ and consider the groupoids $\what{\Pi}_1(X)$ whose objects are the points of $X$ and such that the morphisms from $x$ to $y$ are the natural transformations between the associated fiber functors (carrying their natural pro-finite topology). There is a natural functor
$$ \Pi_1(X) \lrar \what{\Pi}_1(X) $$
which identifies $\what{\Pi}_1(X)$ with the \textbf{pro-finite completion} of $\Pi_1(X)$ (in an appropriate sense). In particular, for each $x \in X$ the group $\what{\Pi}_1(X,x)$ is the pro-finite completion of the fundamental group $\Pi_1(X,x)$.
\end{example}

We will be interested in a variant of example~\ref{e:spaces} with \textbf{graphs} instead of spaces. To fix notation, let us recall what we mean by a \textbf{graph}:
\begin{define}\label{d:graph}
A \textbf{graph} $X = (X_0,X_1,s,t)$ is a pair of sets $X_0,X_1$ (called vertices and edges respectively) together with two maps
$$ s,t: X_1 \lrar X_0 $$
associating with each edge its source and target respectively. Note that every edge carries an orientation (i.e. a graph for us is always \textbf{directed}) and that both loops and multiple edges are allowed.
\end{define}
\begin{define}\label{d:graph-covering}
A map $f:Y \lrar X$ of graphs is called a \textbf{graph covering} if the following two conditions are satisfied:
\begin{enumerate}
\item 
For each edge $e \in X_1$ and every vertex $v \in Y_1$ such that $f(v) = s(e)$ there exists a unique edge $\wtl{e} \in Y_1$ such that $f(\wtl{e}) = e$ and $s(\wtl{e}) = v$.
\item
For each edge $e \in X_1$ and every vertex $v \in Y_1$ such that $f(v) = t(e)$ there exists a unique edge $\wtl{e} \in Y_1$ such that $f(\wtl{e}) = e$ and $t(\wtl{e}) = v$.
\end{enumerate}
\end{define}

A covering map $Y \lrar X$ is called \textbf{finite} if the preimage of each vertex $v \in X_1$ is finite. We will denote by $\E(X)$ the category of finite coverings maps $Y \lrar X$. 

Now recall that every graph $X$ can be geometrically realized into a topological space $|X|$. It is not hard to see that a map $Y \lrar X$ of graphs is a graph covering if and only if the induced map $|Y| \lrar |X|$ is a covering map of topological spaces. Furthermore, every covering map of $|X|$ arises this way. Hence we can describe the pro-finite fundamental groupoid $\what{\Pi}_1(|X|)$ in terms of $X$. More explicitly, given a graph $X$ and vertex $v \in V(X)$ we can consider the fiber functor
$$ F_v: \E(X) \lrar \Set $$
sending each $p: Y \lrar X$ in $\E(X)$ to the set $p^{-1}(v)$. We can then consider the combinatorial version $\what{\Pi}_1(X)$ of $\what{\Pi}_1(|X|)$ to be the pro-finite groupoid whose objects are the vertices of $X$ and whose morphisms are the natural transformations between the associated fiber functors. The realization functor then induces an equivalence of pro-finite groupoids
$$ \what{\Pi}_1(X) \x{\simeq}{\lrar} \what{\Pi}_1(|X|) .$$

Now let $G$ be a group acting on a graph $X$. We then have an induced action of $G$ on $\what{\Pi}_1(X)$ and we can consider the associated Grothendieck exact sequence
$$ 1 \lrar \what{\Pi}_1(X,x) \lrar Q\left(\what{\Pi}_1(X),G,x\right) \lrar G \lrar 1 .$$
The section map then descends to a map
$$ \pi_0\left(X^G\right) \lrar \Sec(\Pi_1(X),G) $$
where $\pi_0$ of a graph denotes the set of connected components of the graph (which can be identified with the set of connected components of the geometric realization of the graph). In \S~\ref{s:pro-trees} below we will prove that when $X$ and $G$ are finite then this section map is an \textbf{isomorphism}. We consider this result as the combinatorial analogue of Grothendieck's section conjecture for curves.

\section{ Pro-finite trees and the section conjecture for graphs }\label{s:pro-trees}
The purpose of this section is to prove the \textbf{section conjecture for graphs} (see Theorem~\ref{t:sec-conj} below). For this purpose we will recall the theory of \textbf{pro-finite trees} as developed in~\cite{zm}.

\begin{define}
A \textbf{pro-finite graph} is a graph $X = (X_0,X_1,s,t)$ (see Definition~\ref{d:graph}) together with a pro-finite topology $\T$ on the set $X_0 \coprod X_1$ such that
\begin{enumerate}
\item
$X_0 \subseteq X_1 \coprod X_0$ is closed in $\T$. In particular the induced topology on $X_0$ is pro-finite as well.
\item
The induced maps
$$ (s)_*, (t)_* : X_1 \coprod X_0 \lrar X_0 $$
(which are the identity on the second component) are continuous.
\end{enumerate}
\end{define}

\begin{example}
Every finite graph can be considered as a pro-finite graph in a unique way.
\end{example}

\begin{example}
Let $\Gam$ be a pro-finite group and $S \subseteq \Gam$ a subset not containing $1$. Let $(\Gam,\Gam \times S,s,t)$ be the Cayley graph of $\Gam$ with respect to $S$ (so that $s(\gam,a) = \gam, t(\gam,a) = \gam\cdot a$ for $\gam \in \Gam,a\in S$). Then $(\Gam,\Gam \times S,s,t)$ carries a natural structure of a pro-finite graph by endowing
$$ \left[\Gam \times S\right] \coprod \Gam \cong \Gam \times \left[S \cup \{1\}\right] $$
with the product topology. We call this graph the \textbf{pro-finite Cayley graph} of $\Gam$.
\end{example}

\begin{define}
We will say that a pro-finite graph $(X_0,X_1,s,t,\T)$ is \textbf{split} if $X_1 \subseteq X_1 \coprod X_0$ is closed in $\T$. In this case the induced topology on $X_1$ is pro-finite as well and $\T$ exhibits $X_1 \coprod X_0$ as a topological disjoint union of $\left(X_1,\T|_{X_1}\right)$ and $\left(X_0,\T|_{X_0}\right)$.
\end{define}

\begin{example}
Let $\Gam$ be a pro-finite set and $S \subseteq \Gam$ a subset not containing $1$. Let $X = (\Gam,\Gam \times S,s,t,\T)$ be the pro-finite Cayley graph of $\Gam$ with respect to $S$. Then $X$ is split if and only if $1$ is not in the closure of $S$ (e.g. when $S$ is finite).
\end{example}

All the pro-finite graphs appearing in this paper will be split (in fact, they will be very close to the pro-finite Cayley graph of a free pro-finite group on a finite set of generators). Since this restriction simplifies things somewhat we will restrict our attention to split pro-finite graphs from now on.

Let $\Pro(\FinAb)$ be the category of pro-finite abelian groups and $\Pro(\FinSet)$ the category of pro-finite sets. The forgetful functor
$$ U:\Pro(\FinAb) \lrar \Pro(\FinSet) $$
commutes with all limits and so admits a left adjoint
$$ \what{\ZZ}:\Pro(\FinSet) \lrar \Pro(\FinAb) .$$
If $X$ is a pro-finite set then the pro-finite abelian group $\what{\ZZ}(X)$ can be considered as the pro-finite abelian group (or $\what{\ZZ}$-module, justifying the notation) \textbf{freely generated from $X$}. The unit map $\iota: X \lrar \what{\ZZ}(X)$ is an embedding and if $A$ is a pro-finite abelian group then continuous homomorphisms
$$ \what{\ZZ}(X) \lrar A $$
are in one-to-one correspondence (obtained via $\iota$) with maps of pro-finite sets
$$ X \lrar U(A) .$$

\begin{rem}
Note that if $X$ is not discrete (i.e. not finite) then $\what{\ZZ}(X)$ will generally not be free in the classical (pro-finite) sense. In fact, one has 
$$ \what{\ZZ}(X) \cong \lim_\alp \what{\ZZ}(X_\alp) $$
where $X_\alp$ runs over all the finite quotients of $X$.
\end{rem}

Now let $X=(V,E,s,t,\T)$ be a split pro-finite graph. We have an associated complex of pro-finite abelian groups
$$ \what{\ZZ}(E) \x{\partial_1}{\lrar} \what{\ZZ}(V) \x{\partial_0}{\lrar} \what{\ZZ} $$
where $\partial_1(e) = t(e) - s(e)$ for every $e \in E \subseteq \what{\ZZ}(E)$ and $\partial_0(v) = 1$ for every $v \in V \subseteq \what{\ZZ}(V)$. One then defines the \textbf{reduced homology groups} $\wtl{H}_1(X),\wtl{H}_0(X)$ to be the homology groups of the above complex. These groups carry a natural structure of pro-finite abelian groups.

\begin{example}
If $X$ is a finite graph then $\wtl{H}_0(X)$ and $\wtl{H}_1(X)$ coincide with the reduced homology groups of $X$ with coefficients in $\what{\ZZ}$ (in the usual topological sense).
\end{example}

The following lemma appears in~\cite{zm}:
\begin{lem}[~\cite{zm} (1.7, 1.12)]\label{l:zm}
Let $X$ be a pro-finite graph. Then for $i=0,1$ one has
$$ \wtl{H}_i(X) \cong \lim_\alp \wtl{H}_i(X_\alp) $$
where $X_\alp$ runs over the finite quotients of $X$.
\end{lem}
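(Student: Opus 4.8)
The plan is to reduce the assertion to the classical fact that cofiltered inverse limits are exact in the category $\Pro(\FinAb)$ of pro-finite abelian groups. The first step is to identify the chain complex
$$ C_\bullet(X):\quad \what{\ZZ}(E) \x{\partial_1}{\lrar} \what{\ZZ}(V) \x{\partial_0}{\lrar} \what{\ZZ} $$
with the inverse limit $\lim_\alp C_\bullet(X_\alp)$ of the (finite) chain complexes attached to the finite quotient graphs $X_\alp$. For this it is enough to check that the vertex sets $V_\alp$ (resp. the edge sets $E_\alp$) of the finite quotient \emph{graphs} of $X$ are cofinal among all finite quotients of the pro-finite set $V$ (resp. $E$): given a clopen partition $P$ of $V$, continuity of $s$ and $t$ shows that $\{\,s^{-1}(C)\cap t^{-1}(C') : C,C'\in P\,\}$ is a finite clopen partition of $E$ on which $s$ and $t$ descend modulo $P$, yielding a finite quotient graph whose vertex set is exactly $V/P$; a symmetric argument, refining a given clopen partition of $E$ by $s^{-1}$ and $t^{-1}$ of an arbitrary clopen partition of $V$, handles $E$. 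Combined with the identity $\what{\ZZ}(Y)\cong\lim_\beta\what{\ZZ}(Y_\beta)$ for a pro-finite set $Y$ recalled above, and the functoriality of $\what{\ZZ}(-)$ (which makes the differentials match), this presents $C_\bullet(X)$ as $\lim_\alp C_\bullet(X_\alp)$ degreewise and compatibly with the maps $\partial_0,\partial_1$.

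The homological heart of the argument is that the functor $\lim_\alp\colon \Pro(\FinAb)^{I}\lrar\Pro(\FinAb)$ over a cofiltered index category $I$ is exact. Left exactness is formal; for the rest it suffices to show that $\lim^1_\alp$ vanishes on cofiltered systems of finite abelian groups, and this holds because the Mittag--Leffler condition is automatic: for a fixed term $M_\alp$, the images of the transition maps $M_\beta\to M_\alp$ form a filtered family of subgroups of the finite group $M_\alp$, so they stabilize. (Equivalently, one may invoke the standard fact that cofiltered limits are exact in any category of compact topological abelian groups, which is exactly where the splitness of $X$ — guaranteeing that $V$ and $E$ are each pro-finite — enters.)

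Finally, an exact functor preserves kernels, images and cokernels, hence commutes with passage to homology of a complex. Applying $\lim_\alp$ to $C_\bullet$ therefore gives, for $i=0,1$,
$$ \wtl{H}_i(X) = H_i\!\left(\lim_\alp C_\bullet(X_\alp)\right) \cong \lim_\alp H_i\!\left(C_\bullet(X_\alp)\right) = \lim_\alp \wtl{H}_i(X_\alp), $$
which is the claim. I expect the only point requiring genuine care to be the cofinality bookkeeping of the first paragraph — matching $C_\bullet(X)$ with the inverse limit of the finite complexes compatibly with differentials — since the exactness of cofiltered limits of finite abelian groups is entirely classical.
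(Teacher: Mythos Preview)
The paper does not supply its own proof of this lemma; it is stated with attribution to Zalesski\^{i}--Mel'nikov~\cite{zm} (their 1.7 and 1.12) and used as a black box, so there is no in-paper argument to compare against.

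Your proposal is correct and follows the expected line. The cofinality bookkeeping in the first paragraph is the only genuinely non-formal step, and your construction is sound: given a clopen partition $P$ of $V$ you produce a finite quotient \emph{graph} with vertex set $V/P$ by partitioning $E$ via $s^{-1}(P)\wedge t^{-1}(P)$, and dually for $E$ you refine a given clopen partition by the pullbacks of any clopen partition of $V$ so that $s,t$ descend. (You are right that the split hypothesis is what makes $E$ itself a pro-finite set, so that $\what{\ZZ}(E)\cong\lim_\alp\what{\ZZ}(E_\alp)$ is available; the paper has restricted to split graphs for exactly this reason.) After that, exactness of cofiltered limits of finite abelian groups---via Mittag--Leffler or, equivalently, via compactness in $\Pro(\FinAb)$---gives that $\lim_\alp$ preserves kernels, images, and cokernels, hence commutes with homology of the three-term complex. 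The case $i=1$ is in fact purely formal (a kernel), and only $i=0$ uses the exactness in an essential way.
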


We say that a pro-finite graph $X$ is connected if $\wtl{H}_0(X) = 0$. Note that if $X$ is a finite graph then this notion coincides with the usual notion of connectedness. Using Lemma~\ref{l:zm} we see that a pro-finite graph is connected if and only if each finite quotient graph of $X$ is connected. However, note that $X$ might be connected as a pro-finite graph and yet \textbf{not connected} as a discrete graph (i.e. when forgetting the pro-finite topology).

\begin{define}
Let $X$ be a pro-finite graph. $X$ is said to be a \textbf{pro-finite tree} if it is connected and $\wtl{H}_1(X) = 0$.
\end{define}

We will be interested in the following types of pro-finite trees. Let $X$ be a finite connected graph and let $p:\wtl{X} \twoheadrightarrow X$ be a universal cover. Let $A = \Aut_X\left(\wtl{X}\right)$ be the group of automorphisms of $\wtl{X}$ over $X$. For each finite index subgroup $H < A$ the (connected) quotient graph $Y_H \x{\df}{=} \wtl{X}/H$ admits a natural covering map $Y_H \lrar X$. We define the \textbf{pro-universal cover} $\what{X}$ of $X$ to be the limit (in the category of pro-finite graphs):
$$ \what{X} = \lim_{H < A, [A:H] < \infty} Y_H .$$
Then by Lemma~\ref{l:zm} we get that $\what{X}$ is a connected. Furthermore it is easy to see that for each $H < A$ and each $u \in \wtl{H}_1(Y_H)$ there exists an $H' < H$ of finite index such that $u$ is \textbf{not} in the image of the induces map
$$ \wtl{H}_1\left(Y_{H'}\right) \lrar \wtl{H}_1(Y_H) .$$
Hence $\wtl{H}_1\left(\what{X}\right) = 0$ and so $\what{X}$ is a \textbf{pro-finite tree}.

The main result we will need from~\cite{zm} is the following fixed point theorem. It is a known fact that a finite group acting on an (oriented) graph always has a fixed vertex. This is usually considered as part of the comprehensive Bass-Serre theory concerning groups acting on trees. The following is a generalization of this fixed point theorem to the pro-finite setting:
\begin{thm}[~\cite{zm}]\label{t:fixed}
Let $G$ be a finite group acting on a pro-finite tree $T$. Then there exists a vertex $v$ fixed by $G$. In fact, the fixed subgraph $T^G$ is also a pro-finite tree.
\end{thm}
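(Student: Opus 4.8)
The plan is to prove the two assertions in turn --- first that $T^G$ is non-empty, and then that it is a pro-finite tree --- reducing the pro-finite statement, as far as possible, to the classical fixed-point theorem of Bass--Serre for a finite group acting on a finite tree. Since $G$ acts by graph automorphisms commuting with $s$ and $t$, the action preserves orientation and admits no inversions: if $g$ fixes an edge $e$ then it fixes both $s(e)$ and $t(e)$. Hence the fixed set $T^G$, with vertex set $V^G$ and edge set $E^G$, is a genuine sub-graph, and it is closed in the pro-finite topology (being the equalizer of the continuous maps $g$ and $\mathrm{id}$), so it is itself a split pro-finite graph. The whole content of the theorem is therefore that $T^G$ is non-empty and that $\wtl{H}_1(T^G) = \wtl{H}_0(T^G) = 0$. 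The principal difficulty throughout is that, unlike in the discrete theory, a finite quotient graph of $T$ need \textbf{not} be a tree, so one cannot simply apply the Bass--Serre theorem level by level to the finite approximations of $T$.

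The heart of the existence statement is the case $G = \ZZ/p$ of prime order, which I would treat homologically. Since $T = (V,E,s,t,\T)$ is a tree, the complex of \S~\ref{s:pro-trees} is a short exact sequence of pro-finite abelian groups
$$ 0 \lrar \what{\ZZ}(E) \x{\partial_1}{\lrar} \what{\ZZ}(V) \x{\partial_0}{\lrar} \what{\ZZ} \lrar 0 ,$$
and, as $\partial_1(e) = t(e)-s(e)$ and $\partial_0 \equiv 1$ are $G$-equivariant, this is an exact sequence of pro-finite $\what{\ZZ}[[G]]$-modules with $G$ acting trivially on the right-hand $\what{\ZZ}$. Suppose for contradiction that $G$ has no fixed vertex. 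Then, there being no inversions, $G$ has no fixed edge either, and since every non-trivial subgroup of $\ZZ/p$ is everything, $G$ acts \textbf{freely} on $V$ and on $E$. For a free pro-finite $G$-set $Y$ the module $\what{\ZZ}(Y)$ is a free pro-finite $\what{\ZZ}[[G]]$-module, so the displayed sequence exhibits the trivial module $\what{\ZZ}$ as having projective dimension at most $1$ over $\what{\ZZ}[[\ZZ/p]]$. This is absurd, since $\ZZ/p$ has periodic, non-vanishing continuous cohomology; concretely $\Ext^2_{\what{\ZZ}[[\ZZ/p]]}(\what{\ZZ},\what{\ZZ}) \cong H^2(\ZZ/p,\what{\ZZ}) \cong \ZZ/p \neq 0$. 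Hence a fixed vertex must exist when $G = \ZZ/p$.

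From here I would bootstrap by induction on $|G|$. If $G$ admits a non-trivial proper normal subgroup $N$, then by induction $T^N$ is a non-empty pro-finite tree on which $G/N$ acts, and a second application of the inductive hypothesis to the action of $G/N$ on $T^N$ produces a vertex fixed by all of $G$, since $T^G = (T^N)^{G/N}$. This settles every solvable $G$, and in particular every $p$-group. The genuinely hard remaining point --- and the place where the finer theory of \cite{zm} is indispensable --- is the case of a non-solvable $G$, where no such normal subgroup is available and the permutation modules $\what{\ZZ}(V),\what{\ZZ}(E)$ are no longer free, so the clean cohomological-dimension argument above breaks down; here one must instead exploit the structure of pro-finite trees directly, via the $G$-invariant minimal sub-tree spanning a finite orbit together with the uniqueness of pro-finite ``geodesics'', which by \cite{zm} are themselves pro-finite trees. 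Granting existence, the second assertion is comparatively soft. That $\wtl{H}_1(T^G) = 0$ is immediate: as $\wtl{H}_1(T) = 0$ the map $\partial_1$ of $T$ is injective, and the boundary map of $T^G$ is its restriction to $\what{\ZZ}(E^G) \subseteq \what{\ZZ}(E)$, hence injective as well. For connectivity, i.e. $\wtl{H}_0(T^G) = 0$, I would again invoke \cite{zm}: any two vertices $v,w \in V^G$ are joined by a \emph{unique} pro-finite interval $[v,w] \subseteq T$, and an automorphism preserving such an interval and fixing its endpoints must fix it pointwise, so $[v,w] \subseteq T^G$. Thus $T^G$ is connected and $1$-acyclic, i.e. a pro-finite tree.
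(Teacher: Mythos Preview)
The paper does not give a proof of this theorem --- it is quoted from \cite{zm} and used as a black box in the proof of Theorem~\ref{t:sec-conj}. So there is no ``paper's proof'' to compare against, and the relevant question is whether your outline constitutes an independent proof.

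Your cohomological argument for $G = \ZZ/p$ is correct and is essentially the standard one. (One should say \emph{projective} rather than free for $\what{\ZZ}(Y)$; the underlying fact that a free pro-finite $\ZZ/p$-set splits as $\ZZ/p \times (Y/G)$ deserves a word, but it holds because finite covers of Stone spaces admit continuous sections.) The argument for $\wtl{H}_1(T^G)=0$ is likewise fine once one checks that $\what{\ZZ}(E^G) \hookrightarrow \what{\ZZ}(E)$ for a closed subspace $E^G \subseteq E$, which follows by writing both as inverse limits over finite quotients.

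However, the proposal is not a complete proof, and you are candid about this. Two essential ingredients are still outsourced to \cite{zm}: the existence of a fixed vertex when $G$ is non-solvable, and the existence and uniqueness of pro-finite geodesics $[v,w]$ used to establish connectivity of $T^G$. The second of these is needed already in the inductive step for solvable groups, since you require $T^N$ to be a pro-finite \emph{tree}, not merely non-empty, in order to apply the hypothesis to $G/N$. In effect, then, only the base case $|G|=p$ is argued from scratch. What you have written is a useful reduction --- it isolates the $p$-group case cohomologically and correctly identifies geodesic uniqueness as the key structural input for connectivity --- but it remains a sketch that leans on \cite{zm} at the decisive points, which is precisely what the paper does by citing the theorem outright.
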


Our main application of this theorem is the following result, which can be considered as the \textbf{section conjecture for finite graphs}. Let $X$ be a finite graph acted on by a finite group $G$. Recall the \textbf{pro-finite fundamental groupoid} $\what{\Pi}_1(X)$ which was defined in \S~\ref{s:inc-graphs}. Then $\what{\Pi}_1(X)$ carries a $G$-action and we have the Grothendieck exact sequence
$$ 1 \lrar \what{\Pi}_1(X,v) \lrar Q(\what{\Pi}_1(X),G,v) \lrar G \lrar 1 .$$
Recall that $\Sec\left(\what{\Pi}_1(X),G\right)$ denotes the set of conjugacy classes of sections
$$ \xymatrix{
Q\left(\what{\Pi}_1(X),G,v\right) \ar[r] & G \ar@/_1.5pc/[l] \\
}$$
and that each fixed vertex of $X$ gives an element of $\Sec\left(\what{\Pi}_1(X),G\right)$, i.e. we have a map
$$ h: V\left(X^G\right) \lrar \Sec\left(\what{\Pi}_1(X),G\right) .$$
We then claim the following:
\begin{thm}\label{t:sec-conj}
Let $X$ be a finite graph acted on by a finite group $G$. Then the map
$$ h: X^G \lrar \Sec\left(\what{\Pi}_1(X),G\right) $$
is surjective an induces a bijection
$$ h: \pi_0\left(X^G\right) \lrar \Sec\left(\what{\Pi}_1(X),G\right) $$
where $\pi_0$ of a graph denotes the corresponding set of connected components.
\end{thm}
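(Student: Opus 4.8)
The plan is to reduce the statement to the pro-finite fixed-point theorem (Theorem~\ref{t:fixed}) applied to the pro-universal cover $\what{X}$. First I would set up the dictionary between sections of the Grothendieck exact sequence and equivariant lifts. Fix a base vertex $v$ and a universal cover $p: \wtl{X} \twoheadrightarrow X$ with a chosen lift $\wtl{v}$ over $v$; passing to the pro-universal cover $\what{X} = \lim_H Y_H$, the deck group is the pro-finite completion $A = \what{\Pi}_1(X,v)$, and $\what{X}$ is a pro-finite tree by the discussion preceding Theorem~\ref{t:fixed}. The key observation is that a section $s: G \lrar Q(\what{\Pi}_1(X),G,v)$ is exactly the data needed to promote the $G$-action on $X$ to a $G$-action on $\what{X}$ covering it: given $\sig \in G$, the pair $s(\sig) = (\sig, \alp_\sig)$ with $\alp_\sig \in \what{\Pi}_1(X)(\sig v, v) = \Iso(F_{\sig v}, F_v)$ determines, together with the chosen lift $\wtl{v}$, a unique automorphism $\wtl{\sig}$ of $\what{X}$ lying over the action of $\sig$ on $X$ and sending the fiber over $v$ appropriately; the cocycle identity for a homomorphism $s$ is precisely what makes $\sig \mapsto \wtl{\sig}$ a genuine group action. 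Conjugating $s$ by an element of $\what{\Pi}_1(X,v)$ corresponds to changing the base lift $\wtl{v}$ by a deck transformation, so $\Sec(\what{\Pi}_1(X),G)$ is canonically identified with the set of $G$-actions on $\what{X}$ lifting the given one, modulo the deck group $A$.

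Next I would prove surjectivity of $h$. Given a section $s$, form the lifted $G$-action on the pro-finite tree $\what{X}$ as above. By Theorem~\ref{t:fixed} there is a vertex $\what{w} \in \what{X}$ fixed by this $G$-action. Its image $w = \what{p}(\what{w}) \in X$ is then a $G$-fixed vertex of $X$, so $w \in V(X^G)$. It remains to check $h(w) = s$ in $\Sec(\what{\Pi}_1(X),G)$: choosing the path/isomorphism in $\what{\Pi}_1(X)$ from $v$ to $w$ realized inside $\what{X}$ by a path from $\wtl{v}$ to $\what{w}$, the section $s_w$ built in \S\ref{s:grothendieck} from $w$ literally records, for each $\sig$, the deck transformation comparing $\sig$ applied to this path with the path itself — and since $\what{w}$ is fixed by the lifted action this comparison is exactly $\alp_\sig$, i.e. $s_w = s$ up to conjugacy. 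This also shows $h$ factors through $\pi_0(X^G)$ and that the induced map $\pi_0(X^G) \lrar \Sec$ is surjective; alternatively one invokes Remark~\ref{r:invariant-iso} directly once injectivity is in hand.

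Finally, injectivity of $\pi_0(X^G) \lrar \Sec(\what{\Pi}_1(X),G)$. Suppose $w, w' \in V(X^G)$ give conjugate sections. Lift to the pro-finite tree: the $G$-actions on $\what{X}$ associated to $w$ and $w'$ agree (after adjusting by a deck transformation, which is exactly what conjugacy of sections provides), call the common lifted action $\rho$. Then $\what{X}^\rho$ contains lifts $\what{w}, \what{w}'$ of $w, w'$, and by the second clause of Theorem~\ref{t:fixed} the fixed subgraph $\what{X}^\rho$ is itself a pro-finite tree, hence connected, hence path-connected in the combinatorial sense inside $\what{X}$. Projecting a path in $\what{X}^\rho$ from $\what{w}$ to $\what{w}'$ down to $X$ gives a path in $X^G$ from $w$ to $w'$, so $w$ and $w'$ lie in the same connected component of $X^G$. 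Combined with surjectivity this yields the claimed bijection.

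The main obstacle I anticipate is the bookkeeping in the first paragraph: making the correspondence ``sections $\leftrightarrow$ lifted $G$-actions on $\what{X}$, modulo deck transformations'' completely precise, including checking that the homomorphism condition on $s$ matches the associativity of the lifted action and that conjugacy on the section side matches the $A$-action on base lifts. Once that equivalence is nailed down, both surjectivity and injectivity are essentially immediate consequences of the two halves of Theorem~\ref{t:fixed} (existence of a fixed vertex; the fixed subgraph being a tree, hence connected), so the topological/combinatorial input is entirely black-boxed and the argument is short.
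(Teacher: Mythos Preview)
Your approach is essentially the paper's: both build the action of $Q(\what{\Pi}_1(X),G,v)$ on the pro-universal cover $\what{X}$, restrict along a section to get a $G$-action on this pro-finite tree, and then read off surjectivity and injectivity from the two clauses of Theorem~\ref{t:fixed}. The one genuine wobble is in your injectivity step, where you assert that the pro-finite tree $\what{X}^\rho$ is ``path-connected in the combinatorial sense'' and propose to project a path from $\what{w}$ to $\what{w}'$ down to $X^G$. As the paper explicitly warns just after Lemma~\ref{l:zm}, a pro-finite graph can be connected (meaning $\wtl{H}_0=0$, equivalently every finite quotient is connected) while failing to be connected as a discrete graph, so there need not be any such path. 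The repair is what the paper does: argue instead that the \emph{image} $p\left(\what{X}^\rho\right)$ is a connected finite subgraph of $X^G$ --- if it decomposed as a disjoint union of two nonempty finite subgraphs, their preimages would give a clopen partition of $\what{X}^\rho$, hence a disconnected finite quotient, contradicting pro-finite connectedness --- and since $w,w'$ both lie in this image they share a component of $X^G$. With that adjustment your outline matches the paper's proof.
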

\begin{proof}
Let $A = \Aut\left(\wtl{X}/X\right)$ as above and
$$ \what{X} = \lim_{H < A, [A:H] < \infty} Y_H $$
be the pro-universal cover of $X$ with $p:\what{X} \lrar X$ the induced map. Let $\what{v} \in \what{X}$ be a vertex over $v$. Our first observation is that the choice of $\what{v}$ determines an action of $Q\left(\what{\Pi}_1(X),G_L,v\right)$ on $\what{X}$. Although this is a straight forward  pro-finite adaptation of standard covering space theory let us explain this point with some more detail as to establish the setting for the rest of the proof.

First note that inside the partially ordered set (poset) of finite index subgroups $H < A$ the sub-poset of finite index \textbf{normal} subgroups is cofinal. Hence we have a natural isomorphism
$$ \what{X} \cong \lim_{H \triangleleft A, [A:H] < \infty} Y_H .$$
Let $\alp \in \what{\Pi}_1(X,v)$ be an element (so that $\alp: v \lrar v$ is a morphism in $\what{\Pi}(X)$) and let $\what{u} = \alp^{-1}_*\left(\what{v}\right)$ be the the pre-image of $\what{v}$ under the induced map
$$ \alp_*: p^{-1}(v) \lrar p^{-1}(v) .$$
For each $H \triangleleft A$ of finite index let $v_H,u_H$ be the respective images of $\what{v},\what{u}$ in $Y_H = Y/H$. Since $H$ is normal we have that $Y/H \lrar X$ is a normal covering and so there exists a unique automorphism $T_{\alp,H} \in \Aut_X(Y_H) = A/H$ which sends $v_H$ to $u_H$. The automorphisms $T_{\alp,H}$ are compatible as $H$ ranges over finite index normal subgroups and so induce a pointed automorphism
$$ \xymatrix{
\left(\what{X},\what{v}\right) \ar^{T_\alp}[rr]\ar[dr] && \left(\what{X},\what{u}\right) \ar[dl] \\
& \left(X,v\right) & \\
}$$
The association $\alp \mapsto T_\alp$ is easily seen to be a free action of $\what{\Pi}_1(X,v)$ on $\what{X}$. In fact, this identifies $\what{\Pi}_1(X,v)$ with the group
$$ \Aut_X\left(\what{X}\right) \cong \lim_{H \triangleleft A, [A:H] < \infty} \Aut_X(Y_H) = \lim_{H \triangleleft A, [A:H] < \infty} A/H =  \what{A} .$$
Note that in order to make this identification we needed to choose the vertex $\what{v}$, but different choices would have resulted in conjugated isomorphisms. In particular, we have a canonical identification of finite index normal subgroups of $\what{\Pi}(X,v)$ and finite index normal subgroups of $\what{A}$, which in turn is the same as finite index normal subgroups of $A$.

Let us now explain how to extend this action to all of $Q\left(\what{\Pi}_1(X),G,v\right)$. The quotient
$$ Q\left(\what{\Pi}_1(X),G,v\right) \twoheadrightarrow G $$
induces a \textbf{quasi-action} of $G$ on $\what{\Pi}_1(X)$, i.e. a homomorphism $G \lrar \Out\left(\what{\Pi}_1(X,v)\right)$. In light of the above discussion this gives a quasi-action of $G$ on $\what{A}$ which in turn gives an action of $G$ on the poset of finite index normal subgroups in $A$. Since $G$ is finite we see that the sub-poset of $G$-invariant normal subgroups $H \triangleleft A$ is cofinal, and hence we can assume that the limit
$$ \what{X} = \lim_{H \triangleleft A, [A:H] < \infty} Y_H $$
is taken only over $G$-invariant $H$'s. Now given an element $(g,\alp) \in Q\left(\what{\Pi}_1(X),G,v\right)$ (so that $\alp: g(v) \lrar v$ is a morphism in $\what{\Pi}(X)$) we set as before $\what{u} = \alp^{-1}_*\left(\what{v}\right) \in \what{X}$ (note that this time $\what{u}$ lies above $g(v)$). For each $G$-invariant finite index $H \triangleleft T$ we let $v_H,u_H$ be the respective images of $\what{v},\what{u}$ in $Y_H$. Then there exists a unique automorphism of graphs $T_{g,\alp,H}: Y/H \lrar Y/H$ fitting into a commutative diagram of pointed graphs
$$ \xymatrix{
\left(Y/H,v_H\right) \ar^{T_{g,\alp,H}}[r]\ar[d] & \left(Y/H,u_H\right) \ar[d] \\
\left(X,v\right) \ar^{g}[r] & \left(X,g(v)\right) \\
}$$
As above the maps $T_{g,\alp,H}$ are compatible as $H$ ranges over $G$-invariant finite index normal subgroups $H \triangleleft A$. Hence we get a map $ T_{\alp,g}: \what{X} \lrar \what{X} $ which fits in the commutative diagram
$$ (*)\;\; \xymatrix{
\left(\what{X},\what{v}\right) \ar^{T_{g,\alp}}[r]\ar[d] & \left(\what{X},\what{u}\right) \ar[d] \\
\left(X,v\right) \ar^{g}[r] & \left(X,g(v)\right) \\
}$$
It is not hard to see that the association $(g,\alp) \mapsto T_{g,\alp}$ gives an action of $Q\left(\what{\Pi}(X),G,v\right)$ on $\what{X}$ extending the action of $\what{\Pi}_1(X,v)$ and covering the action of $G$ on $X$.

Let us now proceed with the proof of Theorem~\ref{t:sec-conj}. For each section $s: G \lrar Q(\what{\Pi}_1,G)$ we can restrict the action of $Q(\what{\Pi}_1(X),G)$ on $\what{X}$ to $G$ via $s$. According to Theorem~\ref{t:fixed} the fixed subgraph
$$ \what{X}^{s(G)} \subseteq \what{X} $$
is a \textbf{tree}, and in particular non-empty. Furthermore, if $s,s'$ are two \textbf{different} sections then
$$ \what{X}^{s(G)} \cap \what{X}^{s'(G)} = \emptyset $$
because $s(g)^{-1}s'(g) \in \what{\Pi}_1(X,v)$ which acts freely on $\what{X}$. Now if two sections $s,s'$ are \textbf{conjugated} by $\gam \in \what{\Pi}_1(X,v)$ then
$$ \gam\left(\what{X}^{s(G)}\right) = \what{X}^{s'(G)} .$$
However, if two sections $s,s'$ are \textbf{not} conjugated then by the above considerations we will get that
$$ \gam\left(\what{X}^{s(G)}\right) \cap \what{X}^{s'(G)} = \emptyset $$
for every $\gam \in \what{\Pi}_1(X,v)$. Hence the images
$$ \left\{p\left(\what{X}^{s(G)}\right)\right\}_{[s] \in \Sec\left(\what{\Pi}_1(X),G\right)} $$
is a \textbf{pairwise disjoint} collection of connected subgraphs of $X^G$.

Now let $w \in X^G$ be a fixed vertex and let $\vphi: v \lrar w$ be a morphism in $\what{\Pi}_1(X)$. Then $h(w) \in \Sec\left(\what{\Pi}_1,G\right)$ is represented by the section
$$ s_w: g \mapsto (g,\alp) \in Q(\what{\Pi}_1,X,v) $$
where $\alp = \vphi^{-1} \circ g(\vphi)$.

Let $\what{u} = \alp^{-1}_* \in p^{-1}(g(v))$ as above and let $\what{w} = \vphi_*\left(\what{v}\right) \in p^{-1}(w)$ be the image of $\what{v}$ under the bijection
$$ \vphi_*: p^{-1}(v) \x{\simeq}{\lrar} p^{-1}(w) .$$
Now recall that for every finite covering $q:Y \lrar X$ considered as an object $q \in \E(X)$ we have $g(q) = g \circ q$ and so by definition the map
$$ (g(\vphi))_{q}: q^{-1}(g(v)) \lrar q^{-1}(g(w)) $$
is equal to the map
$$ \vphi_{q'}: q'^{-1}(v) \lrar q'^{-1}(w) $$
where $q' = g^{-1}(q) = g^{-1} \circ q$. In view of the commutative diagram $(*)$ this implies that the following diagram of pointed pro-finite sets

$$ \xymatrix{
\left(p^{-1}(v),\what{v}\right) \ar^{(F_{g,\alp})_*}[d]\ar^{\vphi_*}[r] & \left(p^{-1}(w),\what{w}\right) \ar^{(F_{g,\alp})_*}[d] \ar@{=}[r] & \left(p^{-1}(w),\what{w}\right) \ar^{(F_{g,\alp})_*}[d] \\
\left(p^{-1}(g(v)),\what{u}\right) \ar^{g(\vphi)_*}[r] & \left(p^{-1}(g(w)),\what{w}\right) \ar@{=}[r] & \left(p^{-1}(w),\what{w}\right) \\
}$$
is commutative, and so $\what{w}$ is \textbf{fixed} by $F_{g,\alp}$ for every $g$, i.e.
$$ w \in p\left(\what{X}^{s_w(G)}\right) .$$
Since the various $\what{X}^{s(G)}$'s are pairwise disjoint it follows that for each $[s] \in \Sec\left(\what{\Pi}_1(X),g\right)$ one has
$$ h^{-1}([s]) = V\left(p\left(\what{X}^{s(G)}\right)\right) \neq \emptyset .$$
Since $p\left(\what{X}^{s(G)}\right)$ is also connected (because $\what{X}^{s(G)}$ is a tree) we get that all the vertices in $h^{-1}([s])$ sit in the same connected component of $X^G$. On the other hand, by using Remark~\ref{r:invariant-iso} it is not hard to show that vertices in the same connected component of $X^G$ have identical images under $h$. This means that $h$ induces a bijection between the connected components of $X^G$ and the elements of $\Sec\left(\what{\Pi}_1(X),g\right)$.
\end{proof}

\begin{rem}
The statement of Theorem~\ref{t:sec-conj} remains true if one replaces the pro-finite fundamental groupoid $\what{\Pi}_1(X) \cong \what{\Pi}_1(|X|)$ with the discrete groupoid $\Pi_1(|X|)$. The proof is essentially the same (using the classical fixed point theorem for finite groups acting on trees instead of Theorem~\ref{t:fixed}). The reason we focused our attention on the pro-finite case was our desired applications for rational points on curves (see \S~\ref{s:inc-graphs}).
\end{rem}

\section{ Finite descent on conical curves}\label{s:inc-graphs}

In this section we will turn our attention to \textbf{transverse conical curves} over a \textbf{number field} $K$. The main purpose of this section is to prove Theorem~\ref{t:main} below stating that finite descent is the only obstruction to the Hasse principle for transverse conical curves. 

We begin by encoding the structure of a conical curve in a graph. The following construction and notation recalls that of~\cite{sh}. Let $K$ be a number field with algebraic closure $\ovl{K}$ and let $C/K$ be a projective reduced curve with a normalization map $\nu: \wtl C \lrar C$. Consider the $0$-dimensional schemes
$$ \Lambda = \Spec\left(H^0\left(\wtl C,\mathcal{O}_{\wtl C}\right)\right), \quad
\Pi = C_{\mathrm{sing}}, \quad
\Psi = \nu^{-1}\left(C_{\mathrm{sing}}\right) .$$
Here $\Lambda$ is the $K$-scheme of irreducible components of $C$ (or the connected components of $\wtl C$) and $C_{\mathrm{sing}}$ is the singular locus of $C$. Let $\ovl\Lambda$, $\ovl \Pi$, $\ovl \Psi$ be the $\ovl K$-schemes obtained from $\Lambda$, $\Pi$, $\Psi$ by extending the ground field to $\ovl{K}$. Note that $0$-dimensional schemes over an algebraically closed field can be naturally identified with sets and so we can think of $\ovl\Lambda, \ovl\Pi$ and $\ovl\Psi$ as the sets of irreducible components of $\ovl C$, singular points of $\ovl C$ and critical points of $\ovl{\nu}$ respectively. In addition, since these schemes were base changed from $K$-schemes the corresponding sets carry an action of the Galois group $\Gam_K$.

\begin{define}
Let $C/K$ be a projective reduced curve. We define the {\bf incidence graph} $X(C)$ of $C$ to be the graph whose vertex set is
$$ X(C)_0=\ovl \Lambda\cup\ovl \Pi $$
and whose set of edges is $X(C)_1=\ovl \Psi$. The source map
$$ s: \ovl\Psi \lrar \ovl\Lambda \coprod \ovl\Pi $$
associates with each $Q \in \ovl \Psi$ the connected component $L\in \ovl\Lambda$ containing it and the target map
$$ t: \ovl\Psi \lrar \ovl\Lambda \coprod \ovl\Pi $$
associates with $Q \in \ovl{\Psi}$ its image $P = \nu(Q) \in \ovl \Pi$. By construction $X(C)$ is a bipartite graph with a natural action of the Galois group $\Gamma_K$.
\end{define}

\begin{rem}\label{r:functoriality}
The construction $C \mapsto X(C)$ is not, strictly speaking, functorial. However, it is functorial if we restrict ourselves to \textbf{\'etale maps} $C \lrar D$ of curves. Furthermore, given a finite \'etale map $C \lrar D$ the resulting map of graphs $X(C) \lrar X(D)$ is a finite graph covering (see Definition~\ref{d:graph-covering}). In particular, given a reduced projective curve $C$ we obtain a functor
$$ \X:\E\left(\ovl{C}\right) \lrar \E(X(C)) .$$

\end{rem}

\begin{rem}
If $C$ is a reduced projective curve and $\rho: C' \lrar C$ is the universal map from a transverse curve then $\rho$ induces an isomorphism of graphs
$$ \rho_*: X(C') \lrar X(C) .$$
\end{rem}

\begin{define}
Let $C/K$ be a reduced projective curve. We define the \textbf{splitting field} $L/K$ to be the minimal Galois extension which splits $\ovl\Lam,\ovl\Pi$ and $\ovl\Psi$. We will denote by
$$ G_L = \Gal(L/K) $$
the Galois group of this finite extension. Note that the action of $\Gam_K$ on $X(C)$ factors through the quotient
$$ \Gam_K \twoheadrightarrow G_L  .$$
\end{define}

Our first lemma verifies that when $C$ is \textbf{transverse conical curve} then the incidence graph of $X(C)$ captures all the information regarding finite \'etale coverings of $\ovl{C}$. 

\begin{prop}\label{p:equivalence}
Let $C/K$ be a transverse conical curve. Then the functor (see Remark~\ref{r:functoriality}) 
$$ \X: \E(\ovl{C}) \lrar \E(X(C)) $$
is an equivalence of categories.
\end{prop}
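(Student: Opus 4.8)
The plan is to construct an explicit inverse functor $\Y: \E(X(C)) \lrar \E(\ovl{C})$, or equivalently to show that $\X$ is fully faithful and essentially surjective by analyzing the local structure of $\ovl{C}$ near each singular point. The key geometric input is that $\ovl{C}$ is built by gluing $\ovl{K}$-rational (hence simply connected, once normalized to $\PP^1$) curves along finitely many points, and that the gluing is transverse. Concretely, the normalization $\wtl{\ovl{C}}$ is a disjoint union of smooth rational curves indexed by $\ovl\Lam$, each of which is simply connected (has no nontrivial connected finite \'etale cover), and $\ovl{C}$ is obtained from $\wtl{\ovl C}$ by identifying, for each $P \in \ovl\Pi$, the points of $\ovl\nu^{-1}(P) \subseteq \ovl\Psi$ to a single point. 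So $\ovl{C}$ is a "topological graph of spaces" whose combinatorics is precisely $X(C)$, with contractible vertex spaces.

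First I would make precise the claim that finite \'etale covers of $\ovl{C}$ are controlled by the gluing data. Given a connected finite \'etale cover $f: D \lrar \ovl{C}$, pulling back along $\ovl\nu: \wtl{\ovl C} \lrar \ovl C$ gives a finite \'etale cover of $\wtl{\ovl C}$; since each component of $\wtl{\ovl C}$ is $\PP^1$ (a rational normalization of a rational curve with only finitely many branch points removed is still rational and simply connected as a proper curve), this cover is a trivial (disjoint) cover of each component. The only nontrivial data left is how the finitely many sheets over the preimages of the singular points get identified in $D$; transversality ensures that \'etale-locally near a singular point $\ovl{C}$ looks like $n$ coordinate axes through the origin, so a finite \'etale cover there is determined by a bijection-compatible "star" identification — exactly the data of a graph covering in the sense of Definition~\ref{d:graph-covering}. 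This gives a candidate inverse $\Y$ sending a covering of graphs $Y \lrar X(C)$ to the curve assembled from copies of the components of $\wtl{\ovl C}$ (indexed by the vertices of $Y$ over $\ovl\Lam$) glued along points according to the edges of $Y$.

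Next I would check $\X \circ \Y \cong \Id$ and $\Y \circ \X \cong \Id$. One direction is essentially formal from the construction of $X(C)$: reading off $\ovl\Lam, \ovl\Pi, \ovl\Psi$ from the assembled curve $\Y(Y)$ recovers the vertices and edges of $Y$. The other direction requires the local analysis above, namely that a finite \'etale $D \lrar \ovl C$ is reconstructed (up to unique isomorphism over $\ovl C$) from $\X(D) = X(D) \lrar X(C)$ together with the fact that $D$'s components are again rational (étale covers of rational curves, after normalization, are disjoint unions of $\PP^1$'s — here one uses that a connected finite \'etale cover of a normalization component, which is $\PP^1$, is $\PP^1$ again) and $D \lrar \ovl C$ is again transverse conical. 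Full faithfulness then reduces to the statement that a morphism of covers $D \lrar D'$ over $\ovl C$ is determined by, and can be built from, the induced morphism of graph covers, which follows because morphisms of covers are themselves determined by their behavior on the (contractible) components and on the star of each singular point.

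The main obstacle I expect is the local étale-structure argument at the singular points: one must verify carefully that transversality (the branches meeting like $n$ coordinate axes) forces a finite \'etale cover to split into the graph-covering data with no extra monodromy — in particular that there is no "local fundamental group" at a transverse singular point beyond what the incidence graph sees, and that this gluing description is compatible with descent from $\ovl{C}$'s components. A secondary subtlety is confirming that each irreducible component of a finite \'etale cover of $\ovl C$ remains $\ovl K$-rational, so that the target of $\Y$ genuinely lands in transverse conical curves and the induction/reconstruction is legitimate; this is where conicality (rather than just transversality) is essential, and it follows from the fact that a finite \'etale cover of the smooth locus of a rational curve has rational components. Everything else is bookkeeping of the gluing combinatorics, which matches Definition~\ref{d:graph-covering} by design.
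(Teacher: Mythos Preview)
Your approach is essentially the same as the paper's: both construct an explicit inverse functor by gluing copies of the rational components of $\wtl{\ovl C}$ along points according to the combinatorics of a graph covering $Y \to X(C)$ (the paper packages this as a colimit $\colim_{v\in Y}\mathcal{S}(p(v))$ over $Y$ viewed as a category, but it is the same construction), and then checks that this is inverse to $\X$. One small wording issue: the clause ``a finite \'etale cover of the smooth locus of a rational curve has rational components'' is false as stated; what you actually need (and use earlier) is that a finite \'etale cover of $\PP^1_{\ovl K}$ is trivial, which is what forces the pullback to $\wtl{\ovl C}$ to split and makes the components of $D$ rational.
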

\begin{proof}

In order to show that $\X$ is an equivalence we will construct an explicit inverse for it. Recall that $X(C)$ is a \textbf{two-sided} graph - the edges always go from the subset $\ovl{\Pi}$ to the subset $\ovl{\Lam}$. This property is inherited by any covering $Y \lrar X(C)$: one can just divide the vertices of $Y$ to the preimage of $\ovl{\Pi}$ and the preimage of $\ovl{\Lam}$.

Note any two-sided graph $Y$ can be considered as a category with the vertices being the objects and each edge interpreted as a morphism from its source to its target (the identity morphisms are added formally). Due to the two-sidedness no pair of non-trivial morphisms is composable and so we don't need to add any new morphisms.

Now the structure of $X(C)$ as the incidence graph of $C$ defines a natural functor
$$ \mathcal{S}: X \lrar \Sch/_{L} $$
from $X$ to schemes over $L$ where the vertices in $\ovl{\Pi}$ map to their corresponding $L$-point (considered as a copy of $\spec(L)$) and the vertices in $\ovl{\Lam}$ map to the corresponding component (considered as a rational curve over $L$).

Given a covering map $p: Y \lrar X(C)$ we can consider it as a functor from $Y$ to $X(C)$ and hence obtain a functor
$$ \mathcal{S} \circ p: Y \lrar \Sch/_{L} .$$
We then define a functor
$$ \R_L: \E(X(C)) \lrar \E\left(C_L\right) $$
by associating with each $p: Y \lrar X(C)$ in $\E(X(C))$ the curve
$$ \R_L(Y) \x{\df}{=} \colim_{v \in Y} \mathcal{S}(p(v)) $$
which admits a natural finite \'etale map to $\ovl{C}_L$. We call $\R_L(p)$ the $L$-realization of the covering map $p: Y \lrar X(C)$. Now composing with the base change functor
$$ - \otimes_L \ovl{K}: \E\left(C_L\right) \lrar \E\left(\ovl{C}\right) $$
we obtain a functor
$$ \R: \E(X(C)) \lrar \E\left(\ovl{C}\right) .$$
We have natural maps
$$ u: D \lrar \R(\X(D)) $$
and
$$ v: \X(\R(Y) \lrar Y $$
which are easily seen to be isomorphisms. Hence the functor $\R$ is an inverse to $\X$ and we are done.
\end{proof}

\begin{rem}
Note that if $C/K$ is a transverse conical curve and $Y \lrar X(C)$ is a covering of graphs then $\R(Y)$ is a transverse conical curve as well. Hence we get that every \'etale covering of a transverse conical curve is conical.
\end{rem}

We are now in a position to relate the Grothendieck exact sequences of associated with $\ovl{C}$ and $X(C)$. A finite \'etale map of the form $\R(Y) \lrar \ovl{C}$ carries by construction a canonical $L$-structure. This means that the action of $\Gam_K$ on $\E\left(\ovl{C}\right)$ essentially factors over $L$. More precisely, the $G_L$ action on $X(C)$ induces an action of $G_L$ on $\E(X(C))$ such that $\sig \in G_L$ sends $p: Y \lrar X(C)$ to $\sig \circ p: Y \lrar X(C)$. If we pull this action back to an action of $\Gam_K$ we will get that the functor $\R$ is $\Gam_K$-equivariant (more precisely, it carries a natural structure of $\Gam_K$-equivariant functor). We can phrase this by saying that $\R$ is $(\Gam_K,G_L)$-equivariant.

Now we have a natural map
$$ \rho: \ovl{C}\left(\ovl{K}\right) \lrar V(X(C)) $$
which sends the singular points of $\ovl{C}(\ovl{K})$ to their corresponding vertices in $\ovl{\Pi} \subseteq V(X(C))$ and the smooth points to their (uniquely defined) components in $\ovl{\Lam} \subseteq V(X(C))$. The functor $\R$ introduced in the proof of Proposition~\ref{p:equivalence} induces an isomorphism of pro-finite sets
$$  \R^*:\Iso\left(F_x,F_y\right) \x{\simeq}{\lrar} \Iso\left(F_{\rho(x)},F_{\rho(y)}\right) $$
which is compatible with composition. Hence we obtain a fully-faithful functor of pro-finite groupoids
$$ \vphi: \Pi_1\left(\ovl{C}\right) \x{\simeq}{\lrar} \what{\Pi}_1(X(C)) $$
which is an equivalence since $\what{\Pi}_1(X(C))$ is connected. Finally, the action of $G_L = \Gal(L/K)$ on $\E(X(C))$ induces a natural action of $G_L$ on $\what{\Pi}_1(X)$ and $\vphi$ is naturally $(\Gam_K,G_L)$-equivariant. In particular, every $K$-rational point of $C$ will be mapped by $\rho$ to a $G_L$-fixed vertex, which in turn induces a section of the form
$$ \xymatrix{
1 \ar[r] & \what{\Pi}_1(X(C),v) \ar[r] & Q\left(\what{\Pi}_1(X(C)),G_L,v\right) \ar[r] & G_L \ar[r]\ar@/_1.8pc/[l] & 1 \\
}.$$
Similarly, for each completion $K_\nu$ of $K$ and each $K_\nu$-point $x \in C(K_\nu)$ we get a vertex $\rho(K_\nu)$ which is fixed by the corresponding decomposition group $D_\nu \subseteq G_L$. Hence each $K_\nu$-point induces a section
$$ \xymatrix{
1 \ar[r] & \what{\Pi}_1(X(C),v) \ar[r] & Q\left(\what{\Pi}_1(X(C)),D_\nu,v\right) \ar[r] & D_\nu \ar[r]\ar@/_1.8pc/[l] & 1 \\
}.$$

Now since $\vphi$ is $(\Gam_K, G_L)$-equivariant it induces a map between the Grothendieck exact sequences of $\Pi_1\left(\ovl{C}\right)$ and $\what{\Pi}_1(X(C))$. Namely, we have a map of short exact sequences
\begin{equation}\label{e:map-ses}
\xymatrix{
1 \ar[r] & \Pi_1\left(\ovl{C},x\right) \ar[r]\ar[d] & Q\left(\Pi_1\left(\ovl{C}\right),\Gam_K,x\right) \ar[r]\ar[d] & \Gam_K \ar[r]\ar@{->>}[d] & 1 \\
1 \ar[r] & \what{\Pi}_1(X(C),v) \ar[r] & Q\left(\what{\Pi}_1(X(C)),G_L,v\right) \ar[r] & G_L \ar[r] & 1}
\end{equation}
Similarly, for each place $\nu$ of $K$ we get a corresponding local version of the above diagram, namely
\begin{equation}\label{e:map-ses-loc}
\xymatrix{
1 \ar[r] & \Pi_1\left(\ovl{C},x\right) \ar[r]\ar^{\simeq}[d] & Q\left(\Pi_1\left(\ovl{C}\right),\Gam_\nu,x\right) \ar[r]\ar[d] & \Gam_\nu \ar[r]\ar@{->>}[d] & 1 \\
1 \ar[r] & \what{\Pi}_1(X(C),v) \ar[r] & Q\left(\what{\Pi}_1(X(C)),D_\nu,v\right) \ar[r] & D_\nu \ar[r] & 1}
\end{equation}
and each $K_\nu$-point of $C$ then induces a conjugacy class of compatible sections of the top and bottom row of~\ref{e:map-ses-loc}. We will use the following terminology:

\begin{define}
We will say that a conjugacy class of sections $[s] \in \Sec\left(\Pi_1\left(\ovl{C},x\right),\Gam_K\right)$ is \textbf{locally realizable} if for each place $\nu$ the restriction $[s|_{\Gam_\nu}]$ comes from a $K_\nu$-point of $C$. Similarly we will say that conjugacy class of sections $[s] \in \Sec\left(\what{\Pi}_1(X(C),v),G_L\right)$ is locally realizable if for each place $\nu$ the restriction $[s|_{D_\nu}]$ comes from a $K_\nu$-point of $C$.
\end{define}

We are now ready to formulate finite descent obstruction on $C$ in terms of sections for the Grothendieck exact sequence of $X(C)$:
\begin{prop}\label{p:har-stix-2}
Let $C$ be a geometrically connected transverse conical curve. Then $C^{\fin}(\A) \neq \emptyset$ if and only if there exists a locally realizable element in $\Sec\left(\what{\Pi}_1(X(C),v),G_L\right)$.
\end{prop}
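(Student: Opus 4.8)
The plan is to reduce the statement about $C^{\fin}(\A)$ to a statement purely about sections for $X(C)$ by systematically transporting the finite descent condition across the equivalence $\vphi: \Pi_1(\ovl C) \xrightarrow{\simeq} \what\Pi_1(X(C))$ constructed above. First I would recall what $C^{\fin}(\A) \neq \emptyset$ means: there is an adelic point $(x_\nu) \in C(\A)$ that survives every finite \'etale descent obstruction, i.e.\ for every finite \'etale cover $D \to C$ (with $D$ a $C$-torsor under a finite group scheme, or more generally in the "finite" sense of Stoll) the adelic point lifts to a twist of $D$. The standard reformulation — which I would cite from the Harari--Stix / Stoll circle of ideas — is that $C^{\fin}(\A) \neq \emptyset$ if and only if there exists a conjugacy class of sections $[s] \in \Sec(\Pi_1(\ovl C, x), \Gam_K)$ that is locally realizable, i.e.\ for each place $\nu$ the restriction $[s|_{\Gam_\nu}]$ comes from a genuine $K_\nu$-point of $C$. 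This step is essentially the content of the known equivalence between finite descent and the existence of a "locally geometric" section, and for conical curves it applies because all the relevant covers are themselves conical (by the Remark following Proposition~\ref{p:equivalence}) and hence proper, so that adelic points on twists behave as in the smooth proper case.

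Granting that reformulation, the remaining work is to match up $\Sec(\Pi_1(\ovl C,x),\Gam_K)$ with $\Sec(\what\Pi_1(X(C),v),G_L)$ in a way that preserves local realizability. The diagram~(\ref{e:map-ses}) gives a map from the Grothendieck exact sequence of $\Pi_1(\ovl C)$ to that of $\what\Pi_1(X(C))$ in which the left vertical arrow is an isomorphism (since $\vphi$ is an equivalence) and the right vertical arrow is the surjection $\Gam_K \twoheadrightarrow G_L$. A straightforward diagram chase shows that pulling back sections along such a square induces a \emph{bijection}
$$ \Sec\left(\what\Pi_1(X(C),v),G_L\right) \x{\simeq}{\lrar} \Sec\left(\Pi_1\left(\ovl C,x\right),\Gam_K\right) $$
because the action of $\Gam_K$ on $\Pi_1(\ovl C)$ factors through $G_L$ (this is exactly why $L$ was chosen to be the splitting field): any section $\Gam_K \to Q(\Pi_1(\ovl C),\Gam_K,x)$ automatically factors through $Q(\what\Pi_1(X(C)),G_L,v)$, and conversely. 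Then I would do the same with the local diagrams~(\ref{e:map-ses-loc}): for each place $\nu$, pulling back along that square identifies $\Sec(\what\Pi_1(X(C),v),D_\nu)$ with $\Sec(\Pi_1(\ovl C,x),\Gam_\nu)$, and these identifications are compatible with the restriction maps $[s] \mapsto [s|_{D_\nu}]$ versus $[s] \mapsto [s|_{\Gam_\nu}]$. The key compatibility to check is that the vertex $\rho(x_\nu)$ attached to a $K_\nu$-point $x_\nu$ corresponds, under $\vphi$, to the section attached to $x_\nu$ itself — but this was essentially verified in the discussion preceding the Proposition (via the commuting square $(*)$ in the proof of Theorem~\ref{t:sec-conj}, applied through $\R$), since $\rho$ and $\vphi$ are built from the same functor $\R$.

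Combining the two, a section $[s] \in \Sec(\what\Pi_1(X(C),v),G_L)$ is locally realizable in the graph sense if and only if its image $[s'] \in \Sec(\Pi_1(\ovl C,x),\Gam_K)$ is locally realizable in the curve sense: at each $\nu$, $[s|_{D_\nu}]$ comes from a $K_\nu$-point precisely when $[s'|_{\Gam_\nu}]$ does, because both conditions unwind to "$\rho(x_\nu)$ is the vertex (resp.\ $x_\nu$ is the point) giving that section," and the correspondence is the one just described. Therefore a locally realizable element of $\Sec(\what\Pi_1(X(C),v),G_L)$ exists if and only if a locally realizable element of $\Sec(\Pi_1(\ovl C,x),\Gam_K)$ exists, which by the first step is equivalent to $C^{\fin}(\A) \neq \emptyset$. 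The main obstacle I anticipate is the first step: justifying carefully that the classical "finite descent $=$ existence of a locally realizable section" equivalence genuinely applies to transverse conical curves despite their singularities. The point is to check that finite \'etale covers of $\ovl C$ descend to torsors over $C$ under finite group schemes, that these torsors are proper (which holds since $C$ is projective and the covers are finite), and that the usual argument identifying $C(\A)^{\fin}$ with the locally realizable sections — which only uses properness and the structure of the Grothendieck sequence — goes through verbatim; the singularities play no essential role here because everything is phrased in terms of $\Pi_1$ and fiber functors, which are insensitive to whether the components are smooth.
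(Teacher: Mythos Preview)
Your overall architecture is the same as the paper's: invoke the Harari--Stix theorem to rephrase $C^{\fin}(\A)\neq\emptyset$ as the existence of a locally realizable class in $\Sec\left(\Pi_1\left(\ovl C,x\right),\Gam_K\right)$, and then compare with $\Sec\left(\what\Pi_1(X(C),v),G_L\right)$ via diagram~(\ref{e:map-ses}). The gap is in that comparison. You assert that pulling back along the square gives a \emph{bijection}
\[
\Sec\left(\what\Pi_1(X(C),v),G_L\right)\;\x{\simeq}{\lrar}\;\Sec\left(\Pi_1\left(\ovl C,x\right),\Gam_K\right),
\]
justified by ``the action of $\Gam_K$ on $\Pi_1(\ovl C)$ factors through $G_L$''. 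That reasoning only buys you the pullback identification
\[
Q\left(\Pi_1\left(\ovl C\right),\Gam_K,x\right)\;\cong\;Q\left(\what\Pi_1(X(C)),G_L,v\right)\times_{G_L}\Gam_K,
\]
so that sections on the $\Gam_K$ side correspond to \emph{lifts} $\psi:\Gam_K\to Q\left(\what\Pi_1(X(C)),G_L,v\right)$ of the quotient map. It does \emph{not} force such a lift to factor through $G_L$: the restriction $\psi|_{\Gam_L}$ is a continuous homomorphism $\Gam_L\to\what\Pi_1(X(C),v)$, and since $\Gam_L$ acts trivially on the target there is no constraint killing it. As $\what\Pi_1(X(C),v)$ is a nontrivial free pro-finite group whenever the incidence graph has a cycle, $\Hom_{\mathrm{cont}}\left(\Gam_L,\what\Pi_1(X(C),v)\right)$ is typically enormous, and the map of section sets is genuinely not surjective.

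The paper handles exactly this point: it proves only \emph{injectivity} of the comparison map, and then shows (its Lemma~\ref{l:chebotarev}) that any \emph{locally realizable} class on the $\Gam_K$ side lies in the image. The argument is that local realizability at $\nu$ forces $\psi$ to vanish on $\Gam_\nu\cap\Gam_L$; running over all places and invoking Chebotarev's density theorem then gives $\psi|_{\Gam_L}=0$, so $\psi$ descends to a section $G_L\to Q\left(\what\Pi_1(X(C)),G_L,v\right)$. In other words, the local realizability hypothesis is not merely transported across the comparison --- it is what makes the comparison go through at all. Your proposal is missing this Chebotarev step, and without it the ``only if'' direction does not follow.
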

\begin{proof}

We will rely on the following theorem which appears in~\cite{hs}:
\begin{thm}[(Harari,Stix)]\label{t:har-stix}
Let $X$ be a (not necessarily smooth) projective geometrically connected variety over $K$. Then $X^{\fin}(\A) \neq \emptyset$ if and only if there exists a locally realizable element in $\Sec\left(\Pi_1\left(\ovl{X},x\right),\Gam_K\right)$.
\end{thm}

Since the left most vertical map in~\ref{e:map-ses} is an isomorphism we get that the right square of~\ref{e:map-ses} is a pullback square, i.e. it induces an isomorphism
$$ Q\left(\Pi_1\left(\ovl{C}\right),\Gam_K,x\right) \cong Q\left(\what{\Pi}_1(X(C)),G_L,v\right) \times_{G_L} \Gam_K .$$
This means that sections of the form
$$ \xymatrix{
1 \ar[r] & \Pi_1\left(\ovl{C},x\right) \ar[r] & Q\left(\Pi_1\left(\ovl{C}\right),\Gam_K,x\right) \ar[r] & \Gam_K \ar@/_1pc/[l] \ar[r] & 1 \\
}$$
are in 1-to-1 correspondence with lifts
\begin{equation}\label{e:lift}
\xymatrix{
& & & \Gam_K \ar@{->>}[d] \ar@{-->}_{\psi}[dl] & \\
1 \ar[r] & \what{\Pi}_1(X(C),v) \ar[r] & Q\left(\what{\Pi}_1(X(C)),G_L,v\right) \ar[r] & G_L \ar[r] & 1 \\
}
\end{equation}

Now by pre-composing with $\Gam_K$ we can transform any section of the bottom row of~\ref{e:map-ses} into a lift as in~\ref{e:lift} and hence a section of the top row of~\ref{e:map-ses}. Since pre-composing commutes with conjugation by $\what{\Pi}_1(X(C),v)$ this operation induces a map on the corresponding \textbf{conjugacy classes}, i.e., we have a map
\begin{equation}\label{e:inj-map} 
\rho: \Sec\left(\what{\Pi}_1(X(C),v),G_L\right) \lrar \Sec\left(\Pi_1\left(\ovl{C},x\right),\Gam_K\right) \end{equation}
Since the map $\Gam_K \lrar G_L$ is surjective it follows that the map~\ref{e:inj-map} is injective (note that descending to conjugacy classes preserves injectivity). This injectivity implies that an element in $\Sec\left(\what{\Pi}_1(X(C),v),G_L\right)$ is locally realizable if and only if its image in $\Sec\left(\Pi_1\left(\ovl{C},x\right), \Gam_K\right)$ is locally realizable.

In order to relate Theorem~\ref{t:har-stix} to our desired result we will prove the following:
\begin{lem}\label{l:chebotarev}
If a conjugacy class $[s] \in \Sec\left(\Pi_1\left(\ovl{C},x\right),\Gam_K\right)$ is locally realizable then $[s] = \rho([t])$ for some element $[t] \in \Sec\left(\what{\Pi}_1(X(C),v),G_L\right)$ (which by the above is also locally realizable).
\end{lem}
\begin{proof}
The section $s$ corresponds to a lift $\psi: \Gam_K \lrar Q\left(\what{\Pi}_1(X(C)),G_L,v\right)$ as in~\ref{e:lift}. Let $\Gam_L \subseteq \Gam_K$ be the kernel of the quotient map $\Gam_K \lrar G_L$. Since $[s]$ is locally realizable it follows that $[s|_{\Gam_\nu}]$ comes from a class in $\Sec\left(\what{\Pi}_1(X(C),v),D_\nu\right)$ for each $\nu$. This means that $\psi$ \textbf{vanishes} when restricted to each decomposition subgroup $\Gam_\nu \cap \Gam_L$ (note that such a vanishing is invariant under conjugation). From the Chebotarev's density theorem it follows that $\psi$ vanishes on $\Gam_L$ and so descends to a section $t:G_L \lrar Q\left(\what{\Pi}_1(X(C)),G_L,v\right)$. This finishes the proof of the Lemma.
\end{proof}

From Lemma~\ref{l:chebotarev} we get that the subset of $\Sec\left(\Pi_1\left(\ovl{C},x\right),\Gam_K\right)$ of locally realizable elements can be identified with the subset of $\Sec\left(\what{\Pi}_1(X(C),v),G_L\right)$ of locally realizable elements. This means that Theorem~\ref{t:har-stix} implies Proposition~\ref{p:har-stix-2}.
\end{proof}

We are now ready to prove the main result of this section:
\begin{thm}\label{t:main}
Let $K$ be a number field and $C/K$ a geometrically connected transverse conical curve for which $C(\A_K)^{\fin} \neq \emptyset$. Then $C(K) \neq \emptyset$.
\end{thm}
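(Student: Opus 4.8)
The plan is to combine the combinatorial section conjecture (Theorem~\ref{t:sec-conj}) with the reduction of finite descent to sections carried out in Proposition~\ref{p:har-stix-2}. Suppose $C(\A_K)^{\fin} \neq \emptyset$. By Proposition~\ref{p:har-stix-2} there exists a locally realizable conjugacy class of sections $[t] \in \Sec\left(\what{\Pi}_1(X(C),v),G_L\right)$ of the Grothendieck exact sequence attached to the incidence graph $X(C)$ with its $G_L$-action. The incidence graph $X(C)$ is a finite graph and $G_L = \Gal(L/K)$ is a finite group, so Theorem~\ref{t:sec-conj} applies: the map $h: \pi_0\left(X(C)^{G_L}\right) \lrar \Sec\left(\what{\Pi}_1(X(C)),G_L\right)$ is a bijection. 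In particular the class $[t]$ is in the image of $h$, so there is a $G_L$-fixed vertex $w \in X(C)^{G_L}$ with $h(w) = [t]$.

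Next I would promote this fixed vertex to an actual rational point of $C$. Recall the map $\rho: \ovl{C}\left(\ovl{K}\right) \lrar V(X(C))$ and the two sides of the bipartite graph: vertices in $\ovl{\Pi}$ are the singular points of $\ovl{C}$, and vertices in $\ovl{\Lam}$ are the irreducible components. A $G_L$-fixed vertex $w$ is therefore either (i) a $\Gam_K$-fixed point of $\ovl{\Pi}$, i.e.\ a $K$-rational singular point of $C$, in which case $C(K) \neq \emptyset$ immediately; or (ii) a $\Gam_K$-fixed vertex in $\ovl{\Lam}$, i.e.\ a geometrically irreducible component $V \subseteq C$ that is defined over $K$. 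In case (ii), $V$ is a geometrically irreducible curve over $K$ whose base change $\ovl{V}$ is $\ovl{K}$-rational (by the definition of conical curve). I would then need to argue that $V$, and hence $C$, has a $K$-point. This is where local realizability must be used: the hypothesis that $[t]$ is locally realizable means that for each place $\nu$ the restriction $[t|_{D_\nu}]$ comes from a $K_\nu$-point of $C$, which (via $\rho$) lands in the connected component of $w$ in $X(C)^{D_\nu}$; so $V$ has points over every completion $K_\nu$. A geometrically rational curve over a number field that is everywhere locally soluble is globally soluble — for instance a smooth geometrically rational curve is a form of $\PP^1$ hence has trivial (or order dividing $2$) relevant obstructions, and in fact a conic with points everywhere locally has a rational point by Hasse--Minkowski; for the possibly singular $\ovl{K}$-rational $V$ one passes to its normalization, a smooth geometrically rational curve, which is a conic and is again everywhere locally soluble, hence has a $K$-point, which pushes down to a $K$-point of $V \subseteq C$.

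The step I expect to be the main obstacle is exactly the transition from ``$G_L$-fixed vertex with a locally realizable section'' to ``the corresponding component is everywhere locally soluble, hence globally soluble.'' One has to be careful that the local sections $[t|_{D_\nu}]$ are compatible with the global one in the sense of Proposition~\ref{p:har-stix-2} (they are, by the definition of locally realizable), and that the vertex $\rho(x_\nu)$ attached to a $K_\nu$-point $x_\nu$ lies in the same $D_\nu$-component as $w$ — this is again a consequence of Theorem~\ref{t:sec-conj} applied with $G = D_\nu$, since $h(\rho(x_\nu)) = [t|_{D_\nu}] = h(w)|_{D_\nu}$. Granting this, the $K$-rational component $V$ acquires a $K_\nu$-point for every $\nu$ (the $K_\nu$-point $x_\nu$ of $C$ must lie on a component through a vertex in the relevant component of $X(C)^{D_\nu}$, which forces it onto $V$ or a component meeting $V$ at a $K_\nu$-rational node), and the Hasse principle for conics (equivalently, for smooth geometrically rational curves) finishes the argument. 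I would close by remarking that this is precisely the place where the hypothesis ``conical'' — every geometric component is rational — is indispensable, since it is what guarantees the final local-to-global step.

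\begin{proof}
Assume $C(\A_K)^{\fin} \neq \emptyset$. By Proposition~\ref{p:har-stix-2} there is a locally realizable conjugacy class of sections $[t] \in \Sec\left(\what{\Pi}_1(X(C),v),G_L\right)$. Since $X(C)$ is a finite graph and $G_L$ a finite group, Theorem~\ref{t:sec-conj} shows that $h: \pi_0\left(X(C)^{G_L}\right) \lrar \Sec\left(\what{\Pi}_1(X(C)),G_L\right)$ is a bijection; pick a $G_L$-fixed vertex $w \in X(C)^{G_L}$ with $h(w) = [t]$, and let $Z \subseteq X(C)^{G_L}$ be the connected component containing $w$.

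Since $X(C)$ is bipartite with sides $\ovl{\Pi}$ (geometric singular points of $C$) and $\ovl{\Lam}$ (geometric irreducible components of $C$), and the $G_L$-action comes from the $\Gam_K$-action, a $G_L$-fixed vertex is either a $K$-rational singular point of $C$ — in which case $C(K) \neq \emptyset$ and we are done — or a geometrically irreducible component $V \subseteq C$ defined over $K$. Assume the latter. For each place $\nu$ local realizability gives a $K_\nu$-point $x_\nu \in C(K_\nu)$ whose associated section $[t|_{D_\nu}]$ equals the restriction $h(w)|_{D_\nu} = h(w)$ restricted to the decomposition group $D_\nu$. Applying Theorem~\ref{t:sec-conj} with $G = D_\nu$, the vertex $\rho(x_\nu)$ lies in the same connected component of $X(C)^{D_\nu}$ as $w$, which in particular contains $Z$; hence $x_\nu$ lies on the component $V$ (or on a component meeting $V$ at a $K_\nu$-rational node lying in $Z$). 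In either case $V(K_\nu) \neq \emptyset$ for every $\nu$.

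Thus $V$ is a geometrically irreducible curve over $K$ with $\ovl{V}$ an $\ovl{K}$-rational curve and $V(K_\nu) \neq \emptyset$ for all $\nu$. Let $\wtl{V} \lrar V$ be the normalization; then $\wtl{V}$ is a smooth geometrically rational curve over $K$ with $\wtl{V}(K_\nu) \neq \emptyset$ for all $\nu$ (a smooth point of $V$ over $K_\nu$ lifts, and otherwise one uses a $K_\nu$-rational point of $\wtl{V}$ over a singular point). A smooth geometrically rational curve over a number field is a conic, and by the Hasse--Minkowski theorem a conic with points everywhere locally has a $K$-rational point; hence $\wtl{V}(K) \neq \emptyset$, and its image gives $V(K) \neq \emptyset$, so $C(K) \neq \emptyset$.
\end{proof}
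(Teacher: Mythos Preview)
Your proof is correct and follows essentially the same route as the paper's: invoke Proposition~\ref{p:har-stix-2} to obtain a locally realizable section of the graph sequence, apply Theorem~\ref{t:sec-conj} to produce a $G_L$-fixed vertex, dispose of the singular-point case immediately, and in the component case use Theorem~\ref{t:sec-conj} again with $G = D_\nu$ to force local solvability of the distinguished component, finishing by the Hasse principle for conics. The only cosmetic difference is that the paper runs the local step by contradiction (assuming $C_0(K_\nu)=\emptyset$ forces $C_0$ to be isolated in $X(C)^{D_\nu}$, contradicting injectivity of $h$ for $D_\nu$) whereas you argue directly.

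One small tightening: your passage from $V(K_\nu)\neq\emptyset$ to $\wtl V(K_\nu)\neq\emptyset$ is phrased loosely (a $K_\nu$-rational singular point of $V$ need not, a priori, lift to $\wtl V$). The clean fix is already implicit in your argument: when $\rho(x_\nu)\neq w$, the first step of the path in $X(C)^{D_\nu}$ leaving $w$ is a $D_\nu$-fixed \emph{edge}, i.e.\ a $D_\nu$-fixed point of $\ovl\Psi$ on the component of $\wtl C$ over $V$, which is exactly a $K_\nu$-point of $\wtl V$. (The paper glosses over the same point by simply calling $C_0$ ``a conic''.) Also, the node you find lies in the $D_\nu$-component of $w$, not necessarily in the $G_L$-component $Z$; this does not affect the conclusion.
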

\begin{proof}
Let $L/K$ be the splitting field of $X(C)$ and $G_L = \Gal(L/K)$. According to Proposition~\ref{p:har-stix-2} there exists a section
$$ \xymatrix{
1 \ar[r] & \what{\Pi}_1(X,v) \ar[r] & Q\left(\what{\Pi}_1(X),G_L,v\right) \ar[r] & G_L \ar@/_1.5pc/_{s}[l] \ar[r] & 1 \\
}.$$
which is locally realizable.

According to Theorem~\ref{t:sec-conj} the graph $X(C)$ has a fixed vertex which induces the section $s$. If this vertex corresponds to a singular point then $C$ has a $K$-rational point and we are done. Hence we can assume that this fixed vertex corresponds to a $\Gam_K$-invariant irreducible component $C_0$ of $C$. We claim that $C_0$ must have points everywhere locally. Since $C_0$ is a conic this will imply that $C_0(K) \neq \emptyset$.

Assume $C_0$ does not have points everywhere locally and let $\nu$ be a place such that $C_0(K_\nu) = \emptyset$. This means that no neighboring vertices of $C_0$ in $X(C)$ are fixed by $D_\nu$ and so $C_0$ is an \textbf{isolated vertex} in the $D_\nu$-fixed subgraph $X^{D_\nu}$. Since the section $s|_{D_\nu}$ comes from a $K_\nu$-point and this $K_\nu$-point can't be on $C_0$ we get that it has to come from a vertex on a \textbf{different} connected component of $X(C)^{D_\nu}$. This means that $s|_{D_\nu}$ is induced by two different vertices which lie on two different connected component $X(C)^{D_\nu}$. But this is impossible in view of Theorem~\ref{t:sec-conj} and we are done.
\end{proof}


%

\end{document}